\documentclass[11pt]{amsart}
\usepackage{amsmath}
 \usepackage{amscd}
 \usepackage{amssymb}

\usepackage{amsmath}
\usepackage{hyperref}
\usepackage[applemac]{inputenc}
\usepackage{tikz-cd}
\usepackage{ulem}
\usepackage[applemac]{inputenc}
\usepackage[all]{xy}
\usepackage{color}

\usepackage{xparse}

\usepackage{xy}
\usepackage{amscd}
\pagestyle{myheadings}

\newfont{\sheaf}{eusm10 scaled\magstep1}

\newtheorem{definition}{Definition}[section]

\newtheorem{proposition}{Proposition}[section]

\newtheorem{theorem}[proposition]{Theorem}

\newtheorem{remark}{Remark}[section]

\DeclareMathOperator{\codim}{codim}

\DeclareMathOperator{\Supp}{Supp}
\DeclareMathOperator{\Sing}{Sing}

\DeclareMathOperator{\Aut}{Aut}

\DeclareMathOperator{\Pic}{Pic}
\DeclareMathOperator{\Sim}{Sym}

\DeclareMathOperator{\rank}{rank}

\title[NOTES]{\small From Enriques surface to Artin-Mumford counterexample }
\author[  A. Verra]{Alessandro Verra}
 \address{Dipartimento di Matematica \\ %
Universit\'a degli Studi di Roma TRE \\ %
Largo San Leonardo Murialdo \\ %
00146 Roma \\ %
Italy} \thanks{This work was partially supported by INdAM-GNSAGA}
 \pagenumbering{arabic}
\pagestyle{plain}
 \email{sandro.verra@gmail.com}
 
\begin{document} \maketitle 
\begin{abstract} {After an Introduction to the themes of Enriques surfaces and Rationality questions, the Artin-Mumford counterexample to L\"uroth problem is revisited.   A construction of it is given, which  is related in an explicit way to the geometry of    Enriques surfaces, more precisely to the special family of Reye congruences and their classical geometry.}   \end{abstract}
\section{Perspectives on Enriques surface and Rationality } 
[\footnote{ \tiny We work overt the complex field.}]
This section   serves as a non technical introduction to this paper and to a wider theme of investigation, in algebraic geometry and its history, we can well summarize by 
 the key words \it Enriques surface \rm and \it rationality of algebraic varieties. \rm 
 These words certainly represent central and related issues, typical of the Italian contribution to algebraic geometry: since the golden age of the classification of complex algebraic surfaces,
 by Castelnuovo and Enriques, to the present times. Their presence is therefore natural in the Proceedings of an INdAM workshop bearing the title \it The Italian contribution to
 Algebraic Geometry between tradition and future.\rm \par
 In particular,  the names of Castelnuovo and Enriques are definitely related to  \it Castelnuovo Criterion of Rationality \rm and to  the discovery of \it Enriques surfaces. \rm 
 As is well known these achievements, in the theory of complex algebraic surfaces and their birational classification,  are strictly related and represent one of the culminating points, in the history of algebraic geometry and of the Italian contribution, during  some fortunate years at the juncture of 19th and 20th centuries. \par Without entering in technical issues, the discovery of Enriques surfaces was motivated, independently from the work of birational classification, by the search of counterexamples to a quite natural conjecture, concerning the rationality of an algebraic surface $S$. More precisely, it was somehow natural to expect that  $S$ is rational if and only if its geometric genus $p_g(S)$ and its irregularity $q(S)$ are zero. \par
  This led Enriques, in the long days of a summer vacation of 1894,  to consider the family of sextic surfaces, in the complex projective space $\mathbb P^3$, having multiplicity two along the edges of a tetrahedron. He proved that a general such a sextic is non rational, though its geometric genus and irregularity are zero. This is a celebrated episode, see the correspondence Castelnuovo-Enriques \cite{CE} p. x and 111.  \par In this way an entire class of surfaces was discovered and nowadays these bear the name of Enriques surfaces. Actually any  Enriques surface turns out to be birational to a sextic as above. Castelnuovo's Criterion was proven in the same period: let $P_2(S)$ be the bigenus  of $S$,  then  $S$ is rational iff
  $$ P_2(S) = q(S) = 0.$$
\par A general motivation for this paper is to stress that, along their more than centennial history, Enriques surfaces certainly did not stay confined in the classification of algebraic surfaces as a kind of special or exhotic plant.  Instead their original interaction with rationality problems in dimension two impressively started to extend to higher dimension, implicitly or explicitly, touching in particular the famous L\"uroth's problem. \par
 L\"uroth's problem is the question of deciding whether an algebraic variety $V$ admitting rational parametric equations $f: \mathbb C^N \to V$ is also rational, that is, it admits birational parametric equations
$g: \mathbb C^d \to V$, $d = \dim V$. \par
L\"uroth's theorem and Castelnuovo's Criterion imply this property for curves and surfaces, but the problem was staying open for years in dimension $\geq 3$. This until the crucial 1971-1972, when three 
counterexamples came out in dimension $3$: by Artin-Mumford, Clemens-Griffiths, Manin-Iskovskikh. This  famous triple episode can be considered as the beginning of
a 'Modern Era' in the domain of rationality problems, see \cite{B2}-1.3 and section 4.1. \par
Whatever it is, starting from Castelnuovo and Enriques and considering all the decades until the present days of Modern Era, the overlapping of perspectives, on the mentioned themes of Enriques surfaces and rationality problems, are often visible and always very interesting. \par As an example, let us mention the debate on L\"uroth's problem, originated in the Fifties from Roth's book \cite{R}, and the related Serre's theorem that unirational implies simply connected, \cite{S}. The threefold considered  is a unirational sextic in $\mathbb P^4$ whose hyperplane sections are sextic Enriques surfaces. The discussion was about, possibly, proving its irrationality via some features of irrationality of its hyperplane sections, see \cite{B3}, \cite{PV}. \par We cannot
add more views and perspectives in this Introduction, but mentioning the extraordinary wave of breakthrough results from the very last years. This is true in particular for stable rationality of some threefolds and cohomological decomposition of their diagonal, see \cite{V1, V2, V3}. \par
Coming to the contents of this paper, we revisit the counterexample of Artin-Mumford, putting in evidence the presence, sometimes behind the scene in the literature, of an Enriques surface $S$. $S$ is embedded in the Grassmannian $\mathbb G$ of lines of $\mathbb P^3$ and well known as a Reye congruence. \par
$S$ brings us in the middle of classical algebraic geometry: $S$ is obtained from a general $3$-dimensional linear system $W$, a web, of quadric surfaces. Moreover the threefold to be considered is the double covering of $W$, 
$$
f: \tilde W \to W,
$$
parametrizing the rulings of lines of the quadrics of $W$. It is easy to see that $\tilde W$ is unirational, see 4.7 and \cite{B3}.  The branch surface of $f$  is a Cayley quartic symmetroid $\tilde S_+$, parametrizing the singular quadrics of $W$. A natural desingularization $\tilde W'$ of $W$ is the blow up of $\tilde W$ at its ten singular points. Artin and Mumford prove that, for a smooth variety $V$, the torsion of $H^3(V, \mathbb Z)$ is a birational invariant. Moreover they prove 
$$
H^3(\tilde W', \mathbb Z) \cong \mathbb Z/2\mathbb Z,
$$
which implies the irrationality of $\tilde W'$, since $H^3(\mathbb P^3, \mathbb Z) = 0$. As is well known the Enriques surface $S$ has the same feature of irrationality:
$$ H^3(S, \mathbb Z) \cong H_1(S, \mathbb Z) \cong \mathbb Z/ 2\mathbb Z. $$
We describe, and use, the nice geometry offered by $S$ and by the Fano threefold $\tilde W$, to reconstruct explicitly, from the nonzero class of $H_1(S, \mathbb Z)$, the nonzero class of $H^3(\tilde W', \mathbb Z)$. In particular we profit of the special feature of the Fano surface of lines of $\tilde W$. Indeed this is split in two irreducible components birational to $S$, see 4.4 and \cite{F2}.   
\begin{remark} \rm For $t \geq 3$, let $W_t$ be a general linear system of dimension $\binom t2$ of quadrics of $\mathbb P^t$. Let $W^4_t \subset W_t$ be the locus of quadrics of rank $\leq 4$. Then  $\dim W^4_t = 2t - 3$ and we have a double covering $f: \tilde W^4_t \to W^4_t$, parametrizing the rulings of subspaces of maximal dimension of the quadrics of $W^4_t$. A special family of these linear systems  is associated to Reye congruences as in \cite{CV}. In this case $W_t$  defines an embedding of a Reye congruence  $S$ as a surface of class $(t, 3t-2)$ in the Grassmannian $\mathbb G_t$ of lines of $\mathbb P^t$. It seems that, like for $t = 3$, $S$ determines nonzero $2$-torsion in $H^3(\tilde W_t^{ '4}  , \mathbb Z)$, $\tilde W_t^{ '4}  $ being a desingularization of $\tilde W_t^{ 4}  $. This will be possibly reconsidered elsewhere.
 \end{remark} 
  { \it Aknowledgements}  \par We are grateful to   Bert van Geemen, John Ottem and Claire Voisin for some useful conversations  and advice on the subject of this paper.  Let us also thank the Scientific Committee of this very interesting conference for its realization.

\section{Webs of quadrics and Enriques surfaces}
 Now we introduce the family of those Enriques surfaces bearing the name of \it Reye congruences,\rm [\footnote{ \tiny See \cite{DK} 3.7 for a historical note.}] Let $\mathbb G$ be the Grassmannian of lines of
$\mathbb P^3$,  a general member $S$ of this family is a smooth Enriques surface embedded in $\mathbb G$ and such that its rational equivalence class
in the  Chow rindg $\rm{CH}^*(\mathbb G)$ is 
\begin{equation}
7 \sigma_{2,0} + 3\sigma_{1,1} \ \ \  [\footnote{\tiny$\sigma_{1,1}$, ($\sigma_{2,0}$),  is the class of the set of lines in a plane, (through a point).}].
\end{equation}
  Then $S$ has degree $10$ and sectional genus $6$ in the Pl\"ucker embedding of $\mathbb G$. Moreover $\mathcal O_S(1)$ is an example of Fano polarization of an Enriques surface, \cite{CDL} 3.5. Notice that, in the period space of Enriques surfaces,  the locus of Reye congruences is an irreducible divisor and coincides with the locus of periods of Enriques surfaces containing a smooth rational curve, \cite{N}. 
The construction of these surfaces was given by Reye, \cite{R}. It relies on projective methods and the beautiful geometry of webs of quadric surfaces. 
 We concentrate on these related topics, recovering a general picture.
 \par  \subsection{\it Webs of quadrics} \  \par Let $E$ be a $4$-dimensional vector space, we fix the notation 
 \begin{equation} \mathbb P^3 := \mathbb P(E) \end{equation}  and define the Pl\"ucker embedding of the Grassmannian of lines of $\mathbb P^3$ by
\begin{equation}
\mathbb G \subset \mathbb P^{5-},
\end{equation}
where $\mathbb P^{5-} := \mathbb P(\wedge^2 E)$. In particular $\mathbb G$ is a smooth quadric hypersurface. Then we consider $E \otimes E$ and its standard direct sum decomposition
\begin{equation}
E \otimes E = \wedge^2 E \oplus \Sim^2 E,
\end{equation}
via the eigenspaces of the involution exchanging the factors of $E \otimes E$.  
The induced involution on $\mathbb P^{15} := \mathbb P(E \otimes E)$ will be denoted by
\begin{equation}
\iota: \mathbb P^{15} \to \mathbb P^{15}.
\end{equation}
We also put $\mathbb P^{9+} := \mathbb P(\Sim^2 E)$ and observe that the set of fixed points of $\iota$ is $\mathbb P^{5-} \cup \mathbb P^{9+}$. Then we define the commutative diagram  \begin{equation} \label{DIAGRAM}
\begin{CD}
{ \mathbb G} @<<< {\mathbb P^3 \times \mathbb P^3} @>>> {\mathbb S} \\
@VVV @VVV @VVV \\
{\mathbb P^{5-}} @<{\lambda_{-}}<< {\mathbb P^{15}} @>{\lambda_+}>>{\mathbb P^{9+}}. \\
\end{CD}
\end{equation}  
Here $\mathbb S$ is the set of points defined by  symmetric tensor $a \otimes b + b \otimes a$ having rank $\leq 2$. Notice that $\mathbb S$ is biregular to the quotient $\mathbb P^3 \times \mathbb P^3 \slash \langle \iota \rangle$. The vertical arrows are the natural inclusions. Moreover $\lambda_+$ and $\lambda_-$ 
are the natural linear projections onto the spaces $\mathbb P^{9+}$ and $\mathbb P^{5-}$.   
The restrictions of  $\lambda_-$ and $\lambda_+$ to $\mathbb P^3 \times \mathbb P^3$ admit an elementary description. Let  $(x,y) \in \mathbb P^3 \times \mathbb P^3$ and let $\ell \in \mathbb G$, where $\ell$ is the line through $x,y$ if $x \neq y$, then we have
\begin{equation}
\lambda_-(x,y) = \ell \in \mathbb G \ \ \text{and}  \ \ \lambda_+(x,y) = x+y \in \mathbb S.
\end{equation}
\begin{definition} $\mathbb T^r$ is the set of points in $\mathbb P^{15}$ defined by tensors $t \in E \otimes E$ having rank $\leq r$.  In particular  $\mathbb T^2$ is $\mathbb P^3 \times \mathbb P^3$ and  $\mathbb T^1$ is its diagonal.   \end{definition}
From the loci $\mathbb T_r$ we have of course the rank stratification
\begin{equation}
\mathbb T^1 \ \subset  \ \mathbb T^2 \ \subset \  \mathbb T^3 \ \subset \ \mathbb P^{15}.
\end{equation}
  Now we pass to the dual space $\mathbb P^{15*}$ of bilinear forms, to be denoted by
 \begin{equation}
\mathbb  B := \mathbb P(E^* \otimes E^*).
\end{equation}
Let $b \in \mathbb B$ then $b^{\perp} \subset \mathbb P^{15}$ will denote its orthogonal hyperplane. Moreover let $W \subset \mathbb B$ then the orthogonal subspace of $W$ is by definition
\begin{equation}
 W^{\perp} := \bigcap_{b \ \in W} b^{\perp}.
 \end{equation}
 Let $W$ be a subspace of dimension $c$ then $\codim {W^{\perp}} = c+1$ and we have
 \begin{equation}
  \vert \mathcal I_{W^{\perp}}(1) \vert = W,
 \end{equation}
 where we reserve the notation $\mathcal I_{W^{\perp}}$ to the ideal sheaf of $W^{\perp}$ in $\mathbb P^{15}$. Notice that $\mathbb B$ contains the subspace
 $\mathbb Q := \mathbb P(\Sim^2 E^*)$ and that this is just the space of quadrics of $\mathbb P^3$. We denote its rank stratification by
\begin{equation} \label{QUADRICS}
\mathbb Q^1 \ \subset  \ \mathbb Q^2 \ \subset \  \mathbb Q^3 \ \subset \ \mathbb Q.
\end{equation}
 Finally we come to web of quadrics of $\mathbb P^3$. We use the traditional word \it web \rm for a $3$-dimensional linear system of divisors of a variety. For short we will use the word \it web of quadrics \rm for a web of quadric surfaces of $\mathbb P^3$.  Let 
 \begin{equation}
 W \subset \mathbb Q \subset \mathbb B
 \end{equation}
  be a general web of quadrics, then $W$ naturally defines an Enriques surface which is a Reye congruence, \cite{DK} 7. We construct it as follows. We have
 \begin{equation} W = \mathbb P(V), \end{equation} 
where $V \subset \Sim^2 E^*$ is a general $4$-dimensional space. Notice that $\iota^*$ is the identity on $V = H^0(\mathcal I_{W^{\perp}}(1))$, therefore the
$5$-dimensional subspace  
\begin{equation} W^{\perp} \subset \mathbb P^{15} \end{equation} satisfies $\iota(W^{\perp}) = W^{\perp}$.  Now observe that the set of fixed points of the involution $\iota \vert W^{\perp}$ is the disjoint union of
the space $\mathbb P^{5-} \subset W^{\perp}$ and of
\begin{equation}
    W^{\perp +} := W^{\perp} \cap \mathbb P^{9+}.
\end{equation}
Therefore $\mathbb P^{5-} \cup W^{\perp +}$ generates $W^{\perp}$ and $\codim W^{\perp +} = 4$ in $\mathbb P^{9+}$, so that
 \begin{equation}
 \lambda_+^*(W^{\perp +}) = W^{\perp}.
 \end{equation}
 This implies that the family of spaces $W^{\perp +}$ coincides with the Grassmannian of codimension $4$ spaces in $\mathbb P^{9+}$. Notice also that the family
 of spaces $W^{\perp}$ is the family of codimension $4$ spaces of $\mathbb P^{15}$ containing $\mathbb P^{5-}$.  Now  consider  
 \begin{equation}
 \lambda_+ \vert \mathbb T_2: \mathbb T_2 \to \mathbb S 
 \end{equation}
as in (\ref{DIAGRAM}). This is the quotient map of $\iota \vert \mathbb T_2$ and a finite double cover branched on $\Sing \ \mathbb S$, that is,  on the image of the diagonal $\mathbb T_1$ of $\mathbb T_2 = \mathbb P^3 \times \mathbb P^3$.   
\par \subsection{ \it Reye congruences of lines} \ \par
\begin{definition} Given a web $W$ let us fix the notation
\begin{equation}
S_+ := \mathbb S \cdot W^{\perp +} \ \ , \ \ \tilde S := \mathbb T_2 \cdot W^{\perp}.
\end{equation}
\end{definition}
We assume that $W$ is general, so that $W^{\perp +}$ is disjoint from $\Sing \mathbb S$ and transversal to the map $\lambda_+$. By Bertini theorem, $\tilde S_+$ and $S$ are smooth, irreducible surfaces and linear sections of $\mathbb T_2$ and $\mathbb S$. It is also clear that
 \begin{equation}
\iota \vert \tilde S: \tilde S \to \tilde S.
\end{equation}
is a fixed point free involution. Indeed $\iota^*$ is the identity on $V$ and $\tilde S$ is disjoint from the set $\mathbb T_1$ of fixed points of $\iota$. Hence $\lambda_+: \mathbb T_2 \to \mathbb S$ restricts to an \'etale double covering
 $
{\lambda_+}\vert \tilde S: \tilde S \to S_+.
$
To complete the picture, we consider the rational map $\lambda_-: \mathbb P^{15} \to \mathbb P^{5-}$ and its restriction $\lambda_- \vert \tilde S$, we fix the notation $S := \lambda_-(\tilde S)$. Of course we have
\begin{equation}
S  \subset \mathbb G \subset \mathbb P^{5-}.
\end{equation}
   \begin{theorem}  The surfaces $S$ and $S_+$ are embeddings, of degree $10$ and sectional genus $6$,  of the same Enriques surface $\tilde S / \langle \iota \vert \tilde S \rangle$.  Moreover one has 
\begin{equation}
\omega_{S_+}(1) \cong \mathcal O_S(1).
\end{equation}
\end{theorem}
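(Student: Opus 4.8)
The plan is to deduce everything from the structure of $\tilde S$ as a K3 surface carrying the fixed-point-free involution $\iota$, and then to read off the two projections $\lambda_\pm$ as the two possible quotient polarizations. First I would identify $\tilde S$ explicitly. Since $\mathbb T_2$ is the Segre variety $\mathbb P(E)\times\mathbb P(E)\subset\mathbb P^{15}$ and $W^\perp$ is the intersection of the four hyperplanes $q^\perp$ attached to a basis $q_0,\dots,q_3$ of $V$, the surface $\tilde S=\mathbb T_2\cdot W^\perp$ is the complete intersection in $\mathbb P^3\times\mathbb P^3$ of four divisors of type $(1,1)$. Writing $h_1,h_2$ for the two hyperplane classes, adjunction gives $\omega_{\tilde S}=\big(\mathcal O(-4,-4)\otimes\mathcal O(4,4)\big)|_{\tilde S}=\mathcal O_{\tilde S}$, and the Koszul resolution of $\mathcal O_{\tilde S}$ together with the K\"unneth formula yields $h^1(\mathcal O_{\tilde S})=0$; as $\tilde S$ is irreducible by Bertini, it is a K3 surface. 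Because $\iota\,|\,\tilde S$ is fixed-point-free, $\bar S:=\tilde S/\langle\iota\rangle$ is an Enriques surface and $\pi\colon\tilde S\to\bar S$ is its K3 double cover, classified by the $2$-torsion class $\omega_{\bar S}$. The map $\lambda_+\,|\,\tilde S$ is, by construction, exactly this quotient, so $S_+\cong\bar S$ and the inclusion $S_+\subset\mathbb P^{9+}$ is an embedding of the Enriques surface.

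Next I would treat $S$. On $\mathbb T_2$ the map $\lambda_-$ sends $(x,y)$ to the line $x\wedge y$, hence is $\iota$-invariant and factors as $\tilde S\xrightarrow{\pi}\bar S\xrightarrow{\mu}S\subset\mathbb G$. A point of $\tilde S$ is a pair $(x,y)$ with $b_q(x,y)=0$ for every $q\in W$, i.e. $x,y$ mutually conjugate with respect to the whole web; the fibre of $\lambda_-$ over a line $\ell$ is the common zero locus on $\ell\times\ell$ of the symmetric $(1,1)$-forms $b_q|_\ell$, equivalently of the image of the restriction map $\rho_\ell\colon V\to\Sim^2(\ell^*)$. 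For general $W$ the locus where $\rank\rho_\ell\le 2$ has the expected codimension $2$ in $\mathbb G$ and is exactly the surface $S$, while $\rank\rho_\ell\le 1$ has codimension $6$ and is therefore empty. On $S$ the image of $\rho_\ell$ is a pencil of binary quadrics, so the fibre is the intersection of two symmetric $(1,1)$-curves on $\mathbb P^1\times\mathbb P^1$; this consists of two points, and since $\tilde S$ is disjoint from the diagonal $\mathbb T_1$ they form a single $\iota$-orbit. Hence $\mu$ is bijective, and a tangent-space computation (again using generality of $W$) shows $\mu$ unramified; thus $\mu$ is a closed embedding and $S\cong\bar S$. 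I expect this step---ruling out a second conjugate pair on a congruence line and excluding contracted curves---to be the main obstacle, and it is precisely here that the generality of the web is indispensable.

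Finally I would compute the numerical invariants and the canonical twist together on $\tilde S$. Both projections pull the hyperplane class back to $\mathcal O(1,1)$, namely $\pi^*\mathcal O_{S_+}(1)=\pi^*\mathcal O_S(1)=\mathcal O(1,1)|_{\tilde S}$, because $x+y$ and $x\wedge y$ are both bilinear of bidegree $(1,1)$. Since $\pi$ is an \'etale double cover, for $L=\mathcal O_S(1)$ or $\mathcal O_{S_+}(1)$ one gets $L^2=\tfrac12\big(\mathcal O(1,1)|_{\tilde S}\big)^2=\tfrac12(h_1+h_2)^6=\tfrac12\cdot 20=10$, and $2g-2=L^2+L\cdot\omega_{\bar S}=10$ gives sectional genus $g=6$. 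For the last isomorphism I would compare the two $\iota$-linearizations of the common underlying bundle $\mathcal O(1,1)|_{\tilde S}$: its natural sections $x+y$ are $\iota$-invariant, whereas $x\wedge y$ is anti-invariant because $\iota$ sends $x\wedge y$ to $y\wedge x=-x\wedge y$. The two linearizations differ by the sign character of the deck group, and twisting the linearization by that character changes the descended line bundle by $\omega_{\bar S}$; consequently $\mathcal O_S(1)\cong\mathcal O_{S_+}(1)\otimes\omega_{\bar S}=\omega_{S_+}(1)$, which is the asserted relation.
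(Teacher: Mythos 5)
Your proposal is correct and follows essentially the same route as the paper: identify $\tilde S$ as a K3 complete intersection of four $(1,1)$-divisors via adjunction, quotient by the fixed-point-free involution $\iota\,|\,\tilde S$ to obtain the Enriques surface, and deduce $\omega_{S_+}(1)\cong\mathcal O_S(1)$ from the splitting of $H^0(\mathcal O_{\tilde S}(1))$ into the symmetric and alternating parts --- your ``two linearizations differing by the sign character'' is precisely the paper's decomposition into the two $6$-dimensional $\pm 1$ eigenspaces of $(\iota\,|\,\tilde S)^*$. Your fibre analysis of $\lambda_-$ (the rank stratification of $\rho_\ell$ and the single-orbit argument) supplies details the paper explicitly omits, so the extra care there is a refinement rather than a divergence.
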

\begin{proof} $\tilde S$ is a K3 surface, endowed with the fixed point free involution $\iota \vert \tilde S$ and embedded in $\mathbb T_2 = \mathbb P^3 \times \mathbb P^3$ as a complete intersection  of four elements of $\vert \mathcal O_{\mathbb P^3 \times \mathbb P^3}(1,1) \vert$. This follows by its construction and adjunction formula. Now $S$ and $S_+$ are embeddings of the same smooth surface $\tilde S / \langle \iota \vert \tilde S \rangle$. Since $\iota \vert \tilde S$ is fixed point free and $\tilde S$ is a smooth K3 surface, then $\tilde S / \langle \iota \rangle$ is a smooth Enriques surface. On the other hand the K3 surface  $\tilde S$ is embedded in $W^{\perp}$ by   $\mathcal O_{\tilde S}(1) := \mathcal O_{\mathbb P^3 \times \mathbb P^3}(1,1) \otimes \mathcal O_{\tilde S}$, a polarization  of genus $11$ and degree $20$. Moreover the involution $\iota \vert \tilde S$ acts on $H^0(\mathcal O_{\tilde S}(1))$ and its quotient map $\lambda_+ \vert \tilde S: \tilde S \to S_+$ is the double cover defined by the canonical sheaf $\omega_{S_+}$ and $\lambda_+^* \mathcal O_{S_+}(1)$
$\cong$ $\mathcal O_{\tilde S}(1)$. Then it follows that we have the decomposition  
\begin{equation}
H^0(\mathcal O_{\tilde S}(1)) = \lambda_+^* H^0(\mathcal O_{S_+}(1)) \oplus \lambda_+^* H^0(\omega_{S_+}(1)).
\end{equation}
The summands respectively are the $6$-dimensional $+1$ and $-1$ eigenspaces of $(\iota \vert \tilde S)^*$. This implies the last part of the statement, we omit some details.
 \end{proof} 
 Clearly $\lambda_-\vert \tilde S: \tilde S \to \mathbb P^{5-}$ factors through $\iota \vert \tilde S$ and defines the embedding
 \begin{equation}
 S \subset \mathbb G \subset \mathbb P^{5-}.
 \end{equation}
  Both $S$ and $S_+$ are examples of Fano models of an Enriques surface.  $S$ is known as a \it
 Reye congruences of lines.  \rm A general Fano model is projectively normal,  hence it is not contained in a quadric, cfr. \cite{CDL}, 3.5.  Instead  $S$ is contained in the smooth quadric $\mathbb G$. This is the special feature of this family. Each $S$ is endowed with a special rank two vector bundle, namely the restriction of the universal bundle of $\mathbb G$. We will use it to introduce the classical construction of $S$, see \cite{Da}, \cite{R} and \cite{DK}-7.
 \subsection{\it Further notation} \ \par The universal bundle over $\mathbb G$ is $ p: U \to \mathbb G$,  then $U_{\ell} \subset E$ and we define 
  \begin{equation}  \mathbb U_{\ell} := \mathbb P(U_{\ell}) \subset \mathbb P^3, \end{equation} for each $\ell \in \mathbb G$. This is the line in $\mathbb P^3$ defining the point $\ell \in \mathbb G$. After some tradition, we say that a $\ell$ is \it a ray \rm of $\mathbb G$. We will also use the $\mathbb P^3$-bundle 
  \begin{equation} \pi: \mathbb P(U \otimes U) \to \mathbb G, \end{equation} whose fibre at $\ell$
is $\mathbb P^3_{\ell} := \mathbb P(U_{\ell} \otimes U_{\ell})$. Let  $ \tau: \mathbb P(U \otimes U) \to \mathbb P^{15}$
be its tautological map, then $\tau_{\ell}: \mathbb P^3_{\ell} \to \mathbb P^{15}$ is the linear embedding defined by the inclusion of $U_{\ell} \otimes U_{\ell} \subset E \otimes E$. Moreover $\tau$ is a morphism  birational onto its image. We identify $\mathbb P^3_{\ell}$ to its image by $\tau_{\ell}$ so that  $\mathbb P^3_{\ell} \subset \mathbb P^{15}$.
 Since  we have \begin{equation}  U_{\ell} \otimes U_{\ell} =  \wedge^2 U_{\ell} \oplus \Sim^2 U, \end{equation} then $\mathbb P^3_{\ell}$  is $\iota$-invariant. Its point $\ell = \mathbb P(\wedge^2 U_{\ell})$ and its plane $Q_{\ell} := \mathbb P(\Sim^2 U_{\ell})$ are the set of fixed points of $\iota \vert \mathbb P^3_{\ell}$. We observe that $\mathbb T_2 \cap \mathbb P^3_{\ell}$ is the quadric  $\mathbb U_{\ell} \times \mathbb U_{\ell}$ and $ \mathbb T_1 \cap \mathbb P^3_{\ell}$ its diagonal.  Finally, the next diagram will be useful:
 \begin{equation}
\begin{CD}
{\mathbb P^{15}} @<{\tau}<< {\mathbb P(U \otimes U)} @>{\pi}>>{\mathbb G}. \\
@AAA @AAA @AAA \\
{ \mathbb T_2} @<{\delta}<< {\mathbb D} @>{\pi}>> {\mathbb G} \\
 \end{CD}
\end{equation}   
Its vertical arrows are the natural inclusions and $\mathbb D$ is the projectivized set of points defined in $\mathbb P(U \otimes U)$ by the decomposable tensors in $U \otimes U$.   
\section{ Reye congruences and Symmetroids}
\subsection{\it The classical construction} \ \par It is now useful to revisit in modern terms Reye's classical construction of the surface $S$, cfr. \cite{R, Da, Co, DK}. Let $W$ be a general web of quadrics, defining as above a smooth $K3$ surface $\tilde S$ and the embeddings $S \subset \mathbb G$ and $S_+ = \mathbb S \cdot W$ of $S$. We assume  $W = \mathbb P(V)$, then $V$ is a space of quadratic forms on $E$ and we have the natural inclusions
  \begin{equation}
  V \subset H^0(\mathcal O_{\mathbb P^3}(2)) \subset H^0(\Sim^2 U^*).
  \end{equation} 
  The evaluation of global sections of $\Sim^2 U^*$ defines a morphism  
 \begin{equation} \label{EVALUATION}
 e: \mathcal O_{\mathbb G} \otimes V \to \Sim^2{ U}^*,
 \end{equation}
 of vector bundles of ranks $4$ and $3$.  Counting dimensions, the degeneracy scheme of $e$ is a surface, provided it is proper and non empty.
\begin{theorem} The degeneracy scheme of $e$ is the Enriques surface $S$ and its rational equivalence class is $7 \sigma_{1,1} + 3\sigma_{2,0}$ in ${\rm CH}^*(\mathbb G)$. 
 \end{theorem}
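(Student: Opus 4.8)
The plan is to identify the degeneracy scheme of $e$ with the surface $S$ --- first as sets, then as schemes for general $W$, which in particular shows it is nonempty of the expected dimension $2$ --- and then to read off its class by the Thom--Porteous formula.

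I would begin by unwinding $e$ fibrewise. Over a ray $\ell\in\mathbb G$ the map $e_\ell\colon V\to\Sim^2 U_\ell^{*}$ is restriction of quadratic forms to the $2$-plane $U_\ell\subset E$, namely $q\mapsto q|_{U_\ell}$. As $\dim V=4>3=\dim\Sim^2 U_\ell^{*}$, the map $e_\ell$ is never injective and its degeneracy is the failure of surjectivity, i.e.\ $\rank e_\ell\le 2$. Dualizing, $\ell$ lies in the degeneracy locus exactly when the transpose $e_\ell^{\vee}\colon\Sim^2 U_\ell\to V^{*}$ has a nonzero kernel, i.e.\ when there is a nonzero $s\in\Sim^2 U_\ell$ pairing to zero with every $q|_{U_\ell}$. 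The elementary point is that over $\mathbb C$ a nonzero binary quadratic form splits, so $s=x\odot y$ with $x,y\in U_\ell$; the pairing then reads $B_q(x,y)=0$ for all $q\in V$, where $B_q$ is the bilinear form of $q$. This says precisely that the decomposable tensor $x\otimes y\in U_\ell\otimes U_\ell\subset E\otimes E$ lies in $W^{\perp}$, i.e.\ $(x,y)\in\mathbb T_2\cap W^{\perp}=\tilde S$ with $\lambda_-(x,y)=\ell$. For general $W$ the four quadrics of $V$ have empty base locus, so no $s$ of rank one can occur (that would force $q(x)=0$ for all $q$); hence the orthogonal vector may be taken of rank two, $x\neq y$, placing $(x,y)$ in $\tilde S\smallsetminus\mathbb T_1$. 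Running the argument both ways gives the set-theoretic equality of the degeneracy locus with $\lambda_-(\tilde S)=S$.

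Next I would promote this to a scheme-theoretic identity. The degeneracy scheme is the determinantal scheme defined by the Fitting ideal of maximal ($3\times3$) minors of $e$, of expected codimension $(4-2)(3-2)=2$; being determinantal of expected codimension it is Cohen--Macaulay, hence has no embedded components. Since the previous Theorem already realizes $S=\tilde S/\langle\iota\rangle$ as a \emph{smooth} surface and we have just matched supports, it remains only to exclude a nonreduced thickening. This is the one genuinely delicate step: I would verify that for general $W$ the corank of $e$ is generically $1$ along $S$ (consistent with the degree-two \'etale cover $\tilde S\to S$ cutting out a one-dimensional orthogonal space), and that the minors generate a radical ideal there, e.g.\ by a Kleiman--Bertini transversality argument over the parameter space of webs $V\subset\Sim^2 E^{*}$, or by a direct Jacobian computation at a general point of $S$. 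Granting genericity, the degeneracy scheme is smooth of dimension $2$, hence reduced and equal to $S$.

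Finally the class. With the locus of expected codimension, Thom--Porteous gives
\begin{equation*}
[S]=\det\!\begin{pmatrix} c_1 & c_2\\ 1 & c_1\end{pmatrix}=c_1^{2}-c_2,\qquad c_i:=c_i(\Sim^2 U^{*}),
\end{equation*}
since $\mathcal O_{\mathbb G}\otimes V$ is trivial and $c(\Sim^2 U^{*}-\mathcal O_{\mathbb G}\otimes V)=c(\Sim^2 U^{*})$. Writing the Chern roots of $U^{*}$ as $\alpha,\beta$, the bundle $\Sim^2 U^{*}$ has roots $2\alpha,\ \alpha+\beta,\ 2\beta$, so $c_1=3c_1(U^{*})$ and $c_2=2c_1(U^{*})^{2}+4c_2(U^{*})$. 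On $\mathbb G=\mathrm{Gr}(2,4)$ one has $c_1(U^{*})=\sigma_1$, $c_2(U^{*})=\sigma_{1,1}$ and $\sigma_1^{2}=\sigma_{2,0}+\sigma_{1,1}$; substituting,
\begin{equation*}
[S]=9\sigma_1^{2}-\bigl(2\sigma_1^{2}+4\sigma_{1,1}\bigr)=7\sigma_1^{2}-4\sigma_{1,1}=7\sigma_{2,0}+3\sigma_{1,1},
\end{equation*}
the value recorded earlier; the check $[S]\cdot\sigma_1^{2}=10\,\sigma_{2,2}$ recovers the degree $10$. Thus the only real work is the reducedness claim of the previous paragraph, the rest being the restriction-of-quadrics reading of $e_\ell$ together with Chern-class bookkeeping.
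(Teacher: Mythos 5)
Your proposal follows essentially the same route as the paper: the fibrewise reading of $e_{\ell}$ as restriction of the quadrics of $W$ to the line $\mathbb U_{\ell}$, the identification of the rank-two locus with $\lambda_-(\tilde S)=S$ (the paper phrases the dual picture as the base line of a pencil of planes in $\mathbb P^3_{\ell}$ meeting the quadric $\mathbb U_{\ell}\times\mathbb U_{\ell}$ in a pair $x$, $y=\iota(x)$, which is exactly your orthogonal decomposable tensor $x\odot y$), and the Thom--Porteous computation, which you carry out more explicitly than the paper does. The one place you diverge is the scheme-theoretic equality: you flag reducedness as the genuinely delicate step and propose an unexecuted Kleiman--Bertini or Jacobian verification. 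The paper avoids this entirely by a degree argument whose ingredients you already have: since $\Supp S_e=S$ with $S$ irreducible of degree $10$, one has $[S_e]=m[S]$ for some integer $m\ge 1$, and your computation $\deg S_e=7+3=10=\deg S$ forces $m=1$, i.e.\ generic reducedness; combined with the Cohen--Macaulayness you correctly invoked (determinantal of expected codimension, hence no embedded components), this gives $S_e=S$ as schemes, so nothing beyond what you wrote is actually needed. Finally, your answer $7\sigma_{2,0}+3\sigma_{1,1}$ (order $7$, class $3$) is the correct one under the convention of the paper's footnote, where $\sigma_{2,0}$ is the class of lines through a point: it agrees with the paper's equation (1) and with the geometric count of seven rays of $S$ through a general point of $\mathbb P^3$. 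The form $7\sigma_{1,1}+3\sigma_{2,0}$ appearing in the statement of the theorem uses the opposite labeling of the two Schubert classes; this is an inconsistency internal to the paper, not an error on your part.
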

 \begin{proof} Assume the degeneracy scheme $S_e$ of $e$ is proper.  Then, computing its class in the Chow ring of $\mathbb G$, we obtain that $[S_e]$ $=$ $7 \sigma_{1,1} + 3\sigma_{2,0}$ in ${\rm CH}^*(\mathbb G)$. Hence $\deg S_e = \deg S = 10$ and the equality $\Supp S_e = S$ implies the statement. To show the equality consider $\ell \in \mathbb G$ and the fibrewise map
\begin{equation}
e_{\ell}: V \to H^0(\mathcal O_{\mathbb U_{\ell}}(2)).
\end{equation}
This is the restriction map to the line $\mathbb U_{\ell} \subset \mathbb P^3$, defined as above.   Equivalently, keeping our previous identifications, we can assume that the curve
\begin{equation}
\Delta_{\ell} \subset \mathbb U_{\ell} \times \mathbb U_{\ell} \subset \mathbb P^3_{\ell} \subset \mathbb P^{15}
\end{equation}
is the diagonal embedding of $\mathbb U_{\ell}$, and that $V \subset H^0(\mathcal O_{\mathbb P^{15}}(1))$. Then $e_{\ell}$ is the restriction map $V \to H^0(\mathcal O_{\Delta_{\ell}}(1))$. Moreover we know that $V$ has a basis $a_1, a_2, a_3, s$ so that $\iota^* a_j = -a_j$ and $\iota^*s = s$, where $s$ is zero on $\Delta_{\ell}$. Hence $e_{\ell}$ degenerates
iff its rank is two. Equivalently $V \to H^0(\mathcal O_{\mathbb P^3_{\ell}}(1))$ defines a pencil of planes in $\mathbb P^3_{\ell}$ and its base line intersects $\mathbb U_{\ell} \times \mathbb U_{\ell}$
 in two distinct points $x, y \in \tilde S$ such that $y = \iota(x)$.  This implies $\Supp S_e = \lambda_-(\tilde S) = S$.
 \end{proof}
 The proof reveals the main geometric feature of $S$, we have:
\begin{equation}
S = \lbrace \ell \in \mathbb G \ \vert \ \dim (V \cap H^0(\mathcal I_{\ell}(2)) = 2 \rbrace.
\end{equation}
In other words $\ell \in S$ iff two quadrics of $W$ contain $\ell$, that is, $\ell$ is in the base locus of a pencil of quadrics of $W$. We can conclude as follows.
\begin{definition} \label{REYEDEGENERACY} The Reye congruence of $W$ is the degeneracy scheme $S$ of the previous morphism $e$, provided $S$ is proper. \end{definition}
\begin{theorem} \label{REYEGEOM} The Reye congruence of $W$ is the family of lines of $\mathbb P^3$ which are in the base locus of a pencil of quadrics contained in $W$.
\end{theorem}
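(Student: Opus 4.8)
The plan is to read the statement off directly from the degeneracy condition defining $S$ in Definition \ref{REYEDEGENERACY}, so the argument is essentially a geometric reinterpretation of the fibrewise analysis already carried out for the evaluation morphism $e$ of (\ref{EVALUATION}). First I would fix a ray $\ell \in \mathbb G$ and examine the fibre homomorphism
$$
e_{\ell} : V \longrightarrow \Sim^2 U_{\ell}^* = H^0(\mathcal O_{\mathbb U_{\ell}}(2)),
$$
which is nothing but the restriction of a quadratic form on $E$ to the line $\mathbb U_{\ell} \subset \mathbb P^3$. The source has dimension $4$ and, since $\mathbb U_{\ell} \cong \mathbb P^1$, the target has dimension $3$. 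By definition $\ell$ lies on the degeneracy scheme exactly when $e_{\ell}$ is not surjective, that is, when $\rank e_{\ell} \leq 2$.

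The key identification is that $\ker e_{\ell}$ consists precisely of the forms of $V$ whose associated quadric contains the line $\ell$: in the notation of the previous theorem, $\ker e_{\ell} = V \cap H^0(\mathcal I_{\ell}(2))$. Thus a form $q \in V$ lies in $\ker e_{\ell}$ iff the quadric $\lbrace q = 0 \rbrace$ is a member of the web $W$ passing through $\ell$. This is the only step where the geometric meaning of $e$ enters, and it is immediate from the fact that $e$ is the restriction morphism.

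Next I would apply rank--nullity: since $\dim V = 4$ and $\dim H^0(\mathcal O_{\mathbb U_{\ell}}(2)) = 3$, the condition $\rank e_{\ell} \leq 2$ is equivalent to $\dim \ker e_{\ell} \geq 2$. Combining this with the previous step, $\ell \in S$ if and only if there is an at least two-dimensional subspace of $V$ all of whose quadrics contain $\ell$. Choosing any two-dimensional subspace of $\ker e_{\ell}$ produces a pencil of quadrics of $W$ in whose base locus $\ell$ lies; conversely such a pencil forces $\dim \ker e_{\ell} \geq 2$. This double implication yields the set-theoretic equality between $S$ and the stated family of lines, which is the content of the theorem.

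I do not expect a genuine obstacle here, since the properness of $S$ and its class $7\sigma_{1,1} + 3\sigma_{2,0}$ were already established in the preceding theorem. The only point to keep in mind is that, for the general web $W$ assumed throughout, the locus where $\ker e_{\ell}$ jumps to dimension $\geq 3$ is empty: this is the rank $\leq 1$ locus of $e$, of expected codimension $(4-1)(3-1) = 6$ inside the fourfold $\mathbb G$, hence vacuous. Consequently $\dim \ker e_{\ell} = 2$ generically on $S$ and the word \emph{pencil} is exactly the right one, the present statement adding to the earlier computation only the geometric dictionary relating degeneracy of $e_{\ell}$ to the existence of a pencil of quadrics of $W$ through $\ell$.
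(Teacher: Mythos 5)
Your proposal is correct and follows essentially the same route as the paper: there the theorem is read off from the fibrewise analysis of the evaluation map $e$, namely that $\ell$ lies in the degeneracy scheme iff $\dim\bigl(V \cap H^0(\mathcal I_{\ell}(2))\bigr) \geq 2$, i.e.\ iff a pencil of quadrics of $W$ contains $\mathbb U_{\ell}$, which is exactly your kernel identification plus rank--nullity. Your closing remark that the rank $\leq 1$ locus is empty for general $W$ (expected codimension $6$ in the fourfold $\mathbb G$) is a correct extra precision, consistent with the paper's assertion that the kernel has dimension exactly $2$.
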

\subsection{ \it Order and class of $S$} \ \par
Following the classical language the order of a surface $Y \subset \mathbb G$ is the number $a$ of rays of $Y$ passing through a general point and the class is the number
$b$ of rays of $Y$ in a general plane. Then one has $[Y] = a\sigma_{1,1} + b\sigma_{2,0}$ in $CH^*(\mathbb G)$.  Of course these notions naturally extend for surfaces in any Grassmannian of lines.  Let us motivate geometrically the equality $$ [S] = 7\sigma_{1,1} + 3\sigma_{2,0} \in {\rm CH}^*(\mathbb G). $$ \par The coefficient $7$ means that 
exactly seven rays of $S$ contain a general point $o \in \mathbb P^3$. Consider the net $W_o \subset W$ of quadrics through  $o$, then its  base locus  is a set of eight distinct points $\lbrace o \ o_1 \dots o_7\rbrace$. It is easy to see that the  lines $ \overline {oo}_1 \dots \overline {oo}_7$  are the seven rays of the family passing through $o$. \par
The coefficient $3$ means that exactly three rays of $S$ are contained in a general plane $P$. The web $W$ restricts on $P$ to a web $W_P$ of conics. How many lines
 in $P$ are fixed component of a pencil of conics of $W_P$? The answer is classical: $W_P$ is generated by four \it double \rm lines, supported on lines in general position. 
 These define a complete quadrilateral. One can check that its three diagonals are precisely the lines with the required property.
 \subsection{ \it The quartic symmetroid} \ \par
Now we concentrate further on $W$, considering the quartic surface
\begin{equation}
\tilde S_{+} = \mathbb Q^3 \cdot W
\end{equation}
parametrizing the singular quadrics of a general $W$. Clearly such a surface is defined, in the projective space $W$, by the determinant  of a 
symmetric $4 \times 4$ matrix of linear forms. For this reason it bears the following name.
\begin{definition} A quartic symmetroid is a surface $\tilde S_{+}$, constructed as above from a general web of quadrics $W$.
\end{definition}
This classical surface is well known.  Let $W \subset \mathbb Q$ be transversal to the quartic hypersurface $\mathbb Q^3 \subset \mathbb Q$ 
and to its singular locus $\mathbb Q^2$. Then, counting dimensions
and degrees, we have $\mathbb Q^1 \cap W = \emptyset$ and, moreover,  the set
\begin{equation} \Sing \tilde S_{+} = \mathbb Q^2 \cap \tilde {S}_+ \end{equation} consists of ten ordinary modes. Actually $\tilde S_+$ is a birational projective model of $\tilde S$, as we are going to see. To this purpose let us  fix on $\mathbb P^3 \times \mathbb P^3$ coordinates 
\begin{equation}
(x,y) := (x_1:x_2:x_3:x_4) \times (y_1:y_2:y_3:y_4),
\end{equation}
so that $\tilde S$ is defined by the four symmetric bilinear equations
\begin{equation} 
\sum_{1 \leq i,j \leq 4} q^{[k]}_{ij}x_iy_j  = 0,  \ \ \ k = 1 \dots 4.
\end{equation}
The set of quadratic forms $q^{[k]} := \sum q^{[k]}_{ij} x_ix_j$ is a basis for the vector space $V$ such that $W = \mathbb P(V)$. From now on we set $z := (z_1:z_2:z_3:z_4)$ and
\begin{equation}
q_z := z_1 q^{[1]} + z_2 q^{[2]} + z_3 q^{[3]} + z_4 q^{[4]},
\end{equation}
denoting by $Q_z$ the quadric in $\mathbb P^3$ defined by $q_z$. Then we compute that
\begin{equation}
\frac {\partial q_z}{\partial x_j} = z_1 q^{[1]}_j + z_2 q^{[2]}_j + z_3 q^{[3]}_j + z_4 q^{[3]}_j, \ \ \ j = 1 \dots 4,
\end{equation}
where we put
$
q^{[k]}_j := \sum_{1 \leq i \leq 4} q^{[k]}_{ij}x_i.
$
Clearly the four bilinear equations
\begin{equation}
\frac {\partial q_z}{\partial x_j} = 0, \ \ \  j = 1 \dots 4,
\end{equation}
define in the product $\mathbb P^3 \times W$  the incidence correspondence
\begin{equation}
\Xi :=  \lbrace (x, z) \in \mathbb P^3 \times W \ \vert \ x \in \Sing Q_z \rbrace.
\end{equation}
This is  the universal singular locus over the family of quadrics $W$. Let 
\begin{equation}
p_x: \Xi \to \mathbb P^3 \ \text{and} \ p_z: \Xi \to W, 
\end{equation}
be the projections of $\Xi$ in the factors of $\mathbb P^3 \times W$. Obviously $p_z(\Xi)$ is the quartic symmetroid $\tilde S_+$, defined by the symmetric determinant
\begin{equation} \det (z_1q_{ij}^{[1]} + \dots + z_4q_{ij}^{[4]}). \end{equation}
Let $\tilde S_x = p_x(\Xi)$  then, eliminating $(z_1:z_2:z_3:z_4)$ from the equations of $\Xi$, the equation of $\tilde S_x$ is a quartic form in $(x_1:x_2:x_3:x_4)$,  namely
\begin{equation}
\tilde S_x = \lbrace \det \ (q^{[k]}_j) = 0 \rbrace.
\end{equation}
Now consider $\tilde S \subset \mathbb P^3 \times \mathbb P^3$ and  its equations in $(x,y)$. Let $\tilde p_x: \tilde S \to \mathbb P^3$ be the first projection and $\tilde S_v = \tilde p_x(\tilde S)$.
Eliminating $y$, one computes that
\begin{equation}
\tilde S_v = \lbrace \det \ (q^{[k]}_j) = 0 \rbrace.
\end{equation}
Then $\tilde S_x = \tilde S_v$ and hence the surfaces $\tilde S_v$, $\tilde S$, $\tilde S_+$ are birational projective models of the same symmetroid $\tilde S_+$.
Moreover let $\tilde p_y: \tilde S \to \mathbb P^3$ be the second projection and $\tilde S_y = p_y(\tilde S)$, then $\tilde S_x$ and $\tilde S_y$ are projectively
isomorphic.  This follows because $\iota(\tilde S) = \tilde S$ and hence $\tilde p_y = \tilde p_x \circ \iota$. We keep the notation $\tilde S_v$ for 
$\tilde S_x$ and $\tilde S_y$. In particular $\tilde S_v$ is the birational image of $p_z: \tilde S_+ \to \mathbb P^3$. The next theorem summarizes our discussion and implements the picture.
\begin{theorem} The $K3$ surface $\tilde S$ in $\mathbb P^3 \times \mathbb P^3$ and the quartic symmetroid $\tilde S_+$ in $W$ are birational to the quartic surface $\tilde S_v$. Moreover 
this surface is the locus in $\mathbb P^3$ of the singular points of the singular quadrics of $W$.
  \end{theorem}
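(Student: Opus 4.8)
The plan is to route everything through the incidence correspondence $\Xi = \lbrace (x,z) \in \mathbb P^3 \times W \mid x \in \Sing Q_z \rbrace$, using it as the common bridge between $\tilde S$, $\tilde S_+$ and $\tilde S_v$, and then to check that each projection in sight is generically one-to-one. I would begin with $p_z \colon \Xi \to \tilde S_+$. Since $W$ is general, it is transversal to $\mathbb Q^3$ and meets $\mathbb Q^2$ exactly in the ten nodes of $\tilde S_+$; hence a general point of $\tilde S_+$ corresponds to a quadric $Q_z$ of rank exactly $3$, that is, a cone with a single vertex. Therefore the fibre $p_z^{-1}(z) = \Sing Q_z$ is a single point for general $z$, so $p_z$ is birational onto $\tilde S_+$, and in particular $\Xi$ is an irreducible surface.

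Next I would treat $p_x \colon \Xi \to \tilde S_v$. For fixed $x$, the condition $x \in \Sing Q_z$ reads $\sum_k z_k\, q^{[k]}_j(x) = 0$ for $j = 1,\dots,4$, a linear system in $z \in W$ whose coefficient matrix is $(q^{[k]}_j(x))$, with determinant precisely the quartic cutting out $\tilde S_x = \tilde S_v$. At a general point $x$ of this quartic the matrix has corank exactly $1$, so the solution $z$, and hence the corresponding point of $W$, is unique; thus $p_x$ is birational onto $\tilde S_x = \tilde S_v$. Combining the two projections, $\tilde S_+$ and $\tilde S_v$ are birational, both being birational to $\Xi$.

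To bring in $\tilde S$ itself, I would use the first projection $\tilde p_x \colon \tilde S \to \tilde S_v$. For fixed $x$, the defining equations of $\tilde S$ become the linear system $\sum_j q^{[k]}_j(x)\, y_j = 0$, $k = 1,\dots,4$, whose matrix is the transpose of the one above and shares the same determinant. By the same corank-$1$ statement, a general $x \in \tilde S_v$ admits a unique $y$ with $(x,y) \in \tilde S$, so $\tilde p_x$ is birational; since $\tilde S$ is a $K3$ surface by the previous Theorem, all three surfaces are birational models of one and the same surface. The \emph{moreover} part is then immediate from the construction: by definition $\tilde S_v = \tilde S_x = p_x(\Xi)$, and $\Xi$ is exactly the set of pairs $(x,z)$ with $x$ a singular point of $Q_z$, so $\tilde S_v$ is precisely the locus in $\mathbb P^3$ of vertices of the singular quadrics of $W$.

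The main obstacle is the single genericity input used three times above: that the symmetric determinantal quartic $\lbrace \det (q^{[k]}_j) = 0 \rbrace$ has corank exactly $1$ at its general point, so that each projection is genuinely one-to-one rather than of higher degree or with positive-dimensional fibres. This should follow from the transversality of $W$ to the rank stratification $\mathbb Q^2 \subset \mathbb Q^3$, equivalently from the fact that $\Sing \tilde S_+$ consists of only the ten nodes: a corank-$2$ point would force $Q_z$ to have rank $\leq 2$, confining the corresponding $z$ to the finite locus $\mathbb Q^2 \cap \tilde S_+$, hence to a proper closed subset of $\tilde S_v$.
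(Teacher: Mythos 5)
Your proposal is correct and takes essentially the same route as the paper: both arguments run through the incidence correspondence $\Xi$, the matrix $(q^{[k]}_j)$ and its transpose relation linking the equations of $\tilde S$ to those of $\Xi$, so that $\tilde p_x(\tilde S)$ and $p_x(\Xi)$ are identified with the same quartic $\lbrace \det (q^{[k]}_j) = 0\rbrace$. The only difference is one of explicitness, not of method: where the paper deduces birationality from the elimination computation and the generality of $W$, you spell out the generic corank-$1$ checks (unique vertex of a rank-$3$ quadric, unique solution $z$, resp.\ $y$, over a general point of the quartic), which is a sound way to fill in what the paper leaves implicit.
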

 Occasionally $\tilde S_v$ is said to be the \it Steinerian \rm of $\tilde S_+$, after some   tradition, \cite{DK} 7.2.    It is now the time to recall some facts on quartic double solids.
\subsection{\it Quartic double solids} \ \par
To begin we recall that a \it quartic double solid \rm is a finite double cover 
\begin{equation}
f: X \to \mathbb P^3
\end{equation}
whose branch scheme is a quartic surface $B \subset \mathbb P^3$. See \cite{C}, \cite{W} and \cite{CPS} for new results and update.
\it We assume that  no line is in $B$ and that $\Sing B$ is a finite set of ordinary double points. \rm Let us consider the blowing  up
 \begin{equation} \sigma: \mathbb P^{3'} \to \mathbb P^3, \end{equation} 
 of $\Sing B$, then  a desingularization of $X$ is provided by the base change \begin{equation}
\begin{CD} \label{MAINDIAG}
 {X'}  @>{f'}>> {\mathbb P^{3'}} \\
  @V{\sigma'}VV @V{\sigma}VV     \\
 X @>f>> {\mathbb P^3.} \\
\end{CD}
\end{equation}
Then $f'$ is the finite double cover branched on the strict transform of $B$ by $\sigma$. This is a smooth, minimal model of $B$ embedded in $\mathbb P^{3'}$, we denote by
\begin{equation}
B' \subset \mathbb P^{3'}.
\end{equation}
The line geometry of $\mathbb P^3$ strongly influences the geometry of $X$.  Consider indeed the universal line $\mathbb U = \lbrace (x, \ell) \in \mathbb P^3 \times \mathbb G \ \vert \ x \in \mathbb U_{\ell} \rbrace$ and its projections
\begin{equation}
\begin{CD}
{\mathbb P^3} @<{t}<< \mathbb U @>{u}>> \mathbb G. \\
\end{CD}
\end{equation}
Then $u$ is the projective universal bundle and $t$ its tautological morphism. The quartic surface $B$ clearly defines defines a rational section
\begin{equation}
s: \mathbb G \to \mathbb P(\Sim^4 U^{*}),
\end{equation}
sending the ray $\ell \in \mathbb G$ to the intersection divisor $ \mathbb U_{\ell} \cdot B \in \vert \mathcal O_{\mathbb U_{\ell}}(4) \vert$. 
\begin{definition} $s:  \mathbb G \to \mathbb P(\Sim^4 U^{*})$ is the section defined by $B$.
 \end{definition}
Since $B$ does not contain lines, the rational map $s$ is a morphism. Let 
 $$
  \mathbb P(\Sim^2 U^{*}) \subset \mathbb P(\Sim^4 U^{*}).
 $$
be the embedding defined by the squaring map. By definition this means that any point $d \in \vert \mathcal O_{\mathbb U_{\ell}}(2) \vert = \mathbb P(\Sim^2 U^{*})_{\ell}$ is embedded
as the point $$ 2d \in \vert \mathcal O_{\mathbb U_{\ell}}(4) \vert = \mathbb P(\Sim^4 U^{*})_{\ell}. $$  We use this embedding to define the family of bitangent lines to $B$.
\begin{definition} $\mathbb F(B)$ is the pull-back of $\mathbb P(\Sim^2 U^{*})$ by $s$. \end{definition}
Clearly, $\Supp \mathbb F(B)$ is the set of bitangent lines to $B$. The structure of $\mathbb F(B)$ is known, see \cite{W} 3 and \cite{CZ} 3.4. We summarize as follows.
\begin{theorem} If $B$ is general $\mathbb F(B)$ is a smooth integral surface and
$$
[\mathbb F(B)] = 12 \sigma_{1,1} + 28 \sigma_{2,0} \in CH^*(\mathbb G).
$$
\end{theorem}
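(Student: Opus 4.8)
The plan is to realize $\mathbb F(B)$ as the pullback $s^{-1}(\mathcal V)$ of the relative surface of squares $\mathcal V := \mathbb P(\Sim^2 U^{*}) \subset \mathbb P(\Sim^4 U^{*})$, to show it is a smooth integral surface by a transversality argument, and finally to identify its class by computing the two intersection numbers $[\mathbb F(B)]\cdot\sigma_{1,1}$ and $[\mathbb F(B)]\cdot\sigma_{2,0}$, that is, its order and class in the sense of Section 3.2. First I would record the bundle picture: $B$ determines a global section $\beta$ of $\Sim^4 U^{*}$ by restriction, and since $B$ contains no line one has $\beta(\ell)\neq 0$ for all $\ell$, so $s(\ell)=[\beta(\ell)]$. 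A useful preliminary is that the fibrewise squaring map $\mathbb P(\Sim^2 U^{*}_{\ell})\to\mathbb P(\Sim^4 U^{*}_{\ell})$, $q\mapsto q^2$, is an injective immersion: it is injective because $q^2$ determines $q$ up to sign, and its differential $\dot q\mapsto 2q\dot q$ is injective because multiplication by the nonzero form $q$ has no kernel. Hence $\mathcal V$ is a \emph{smooth} closed subvariety of $\mathbb P(\Sim^4 U^{*})$ of relative dimension $2$, i.e. of codimension $2$.

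For smoothness and integrality of $\mathbb F(B)$ I would consider the evaluation map $\mathbb G\times\vert\mathcal O_{\mathbb P^3}(4)\vert\to\mathbb P(\Sim^4 U^{*})$ sending $(\ell,B)$ to $[\beta_B(\ell)]$. For fixed $\ell$ the restriction $\Sim^4 E^{*}\to\Sim^4 U^{*}_{\ell}$ is surjective, so this map is a submersion; the parametrized transversality theorem then yields, for general $B$, that $s=s_B$ is transverse to the smooth $\mathcal V$. Therefore $\mathbb F(B)=s^{-1}(\mathcal V)$ is smooth of the expected dimension $\dim\mathbb G-\codim\mathcal V=2$ and represents the class $s^{*}[\mathcal V]\in{\rm CH}^2(\mathbb G)$. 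Integrality I would get by showing the incidence variety $I=\lbrace(\ell,B)\ \vert\ \beta_B(\ell)\in\mathcal V_{\ell}\rbrace$ is irreducible — its projection to $\mathbb G$ is a fibration whose fibres are the irreducible codimension-$2$ preimages of the $\mathcal V_{\ell}$ under the surjective restriction maps — together with connectedness of the general fibre of $I\to\vert\mathcal O_{\mathbb P^3}(4)\vert$, e.g. from transitivity of the monodromy on bitangents; connectedness and smoothness then give irreducibility.

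Finally the class. Since $\lbrace\sigma_{1,1},\sigma_{2,0}\rbrace$ is a self-dual basis of ${\rm CH}^2(\mathbb G)$, the two coefficients of $[\mathbb F(B)]$ are the intersection numbers with these Schubert classes, which I would evaluate as honest enumerations for general $B$. The rays of $\mathbb F(B)$ lying in a general plane $H$: for $\ell\subset H$ the divisor $B\cdot\ell$ equals $C\cdot\ell$ with $C:=B\cap H$ a smooth plane quartic, so $\ell\in\mathbb F(B)$ iff $\ell$ is bitangent to $C$, and there are exactly $28$ such lines, the classical bitangents of a plane quartic. The rays through a general point $o$: projecting $B$ from $o$ gives a $4:1$ cover $B\to\mathbb P^2$ whose apparent contour $\Gamma$ is the birational image of the smooth ramification curve $R=B\cap P_o(B)$, the complete intersection of the quartic with its polar cubic, of degree $12$ and genus $g(R)=19$. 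For general $B$ and $o$ the contour $\Gamma$ has degree $12$ and only nodes and cusps; the cusps, $n(n-1)(n-2)=24$ in number for $n=4$, are the asymptotic $3$-contact tangents through $o$, while the nodes are exactly the bitangents through $o$. Comparing the arithmetic genus $\binom{11}{2}=55$ of $\Gamma$ with $g(R)=19$ gives $55-19-24=12$ nodes. In the order-and-class notation of Section 3.2 these two numbers read $[\mathbb F(B)]=12\sigma_{1,1}+28\sigma_{2,0}$.

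The main obstacle I anticipate is the genericity bookkeeping underlying the order computation: one must guarantee that for general $B$ and general $o$ the apparent contour $\Gamma$ acquires \emph{only} ordinary nodes and cusps, in precisely the stated numbers, so that the genus comparison is legitimate and each node corresponds to exactly one honest bitangent line through $o$ (no triple tangents, no tangents along a component, no excess contact). This is the same genericity that underlies the transversality of $s$ to $\mathcal V$ and the irreducibility of $I$; once it is secured, the class $12\sigma_{1,1}+28\sigma_{2,0}$ follows, in agreement with \cite{W} and \cite{CZ}.
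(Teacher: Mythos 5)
Your proposal addresses a statement that the paper itself does not prove: the paper simply cites Welters \cite{W} and Corvaja--Zucconi \cite{CZ} for the structure of $\mathbb F(B)$, and then explains the two numbers informally, exactly along your lines ($28$ = bitangents of a general plane section, $12$ = nodes of the branch curve of a general projection $B \to \mathbb P^2$). So your enumerative computation of the class coincides with the paper's heuristic but actually carries it out: the $28$ bitangents of a smooth plane quartic for the class, and for the order the genus comparison $p_a(\Gamma) - g(R) - \#\{\text{cusps}\} = 55 - 19 - 24 = 12$, using that $R = B \cap P_o(B)$ is a smooth $(4,3)$ complete intersection of genus $19$ and that the cusps are the $4\cdot 3\cdot 2 = 24$ asymptotic tangents through $o$; these numbers are all correct. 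Likewise your smoothness argument (fibrewise surjectivity of $\Sim^4 E^* \to \Sim^4 U^*_\ell$ plus parametrized transversality, using that $B$ contains no line so that $s$ is a morphism) is sound and honestly replaces the citation.

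The one genuine gap is the integrality step. As stated, ``transitivity of the monodromy on bitangents'' does not apply to your incidence $I \to \vert \mathcal O_{\mathbb P^3}(4)\vert$: its fibres are surfaces, not finite sets of bitangents, and irreducibility of $I$ only yields that the monodromy permutes the irreducible components of a general fibre transitively, not that the fibre is connected. To make the idea precise one should pass to the auxiliary incidence $J = \lbrace (H,\ell) \ \vert \ \ell \subset H, \ \ell \ \text{bitangent to} \ B\cap H \rbrace$, which is a $\mathbb P^1$-bundle over $\mathbb F(B)$ and, over the open set of planes with $B \cap H$ smooth, a $28$-sheeted cover of $\mathbb P^{3*}$; irreducibility of $\mathbb F(B)$ is then equivalent to transitivity of the monodromy of this $28$-sheeted cover, i.e.\ of the monodromy of the $3$-dimensional family of plane sections of the \emph{fixed} general $B$ on the $28$ odd theta characteristics. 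That transitivity is true but is precisely the nontrivial content hiding in the citation (one proves it via Picard--Lefschetz: the vanishing-cycle transvections generate the full $Sp(6,\mathbb F_2)$ because the dual surface of $B$ is irreducible); your sketch leaves it as a black box in a form that does not even directly parse. A smaller notational caution: your appeal to self-duality of $\lbrace \sigma_{1,1}, \sigma_{2,0}\rbrace$, combined with the footnote's definitions ($\sigma_{1,1}$ = lines in a plane, $\sigma_{2,0}$ = lines through a point), actually forces $[\mathbb F(B)] = 28\sigma_{1,1} + 12\sigma_{2,0}$, opposite to what you then write; this discrepancy is inherited from the paper, whose Section 3.2 convention conflicts with that footnote (indeed the Reye class appears there both as $7\sigma_{2,0}+3\sigma_{1,1}$ and as $7\sigma_{1,1}+3\sigma_{2,0}$). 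The unambiguous content of your computation, order $12$ and class $28$, is correct and is what the theorem asserts.
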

In the classical language $\mathbb F(B)$ has \it order \rm $12$ and  \it class \rm $28$. These numbers are easily explained: $28$ is the number of bitangent lines to a general plane section of $B$, which is a smooth plane quartic. Instead $12$ is the number of ordinary nodes of the branch curve of a general projection $B \to \mathbb P^2$. \begin{definition} $\mathbb F(B)$ is the congruence of bitangent lines of $B$. \end{definition} 
The surprising case of $\mathbb F(B)$, $B$ a quartic symmetroid, will be discussed in detail.  Let us  fix our notation for the natural involutions of $X$ and $X'$.
\begin{definition} We respectively denote by $j': X' \to X'$ and by $j: X \to X$ the biregular involutions induced by $f': X' \to \mathbb P^{3'}$ and by $f: X \to \mathbb P^3$.
\end{definition}
 Finally we introduce \it the Fano surface of lines  \rm of $X'$. Let $\ell \in \mathbb F(B)$ then the line $\mathbb U_{\ell}$ is bitangent to $B$. Moreover, for $\ell$ general in $\mathbb F(B)$, it is also true  that 
 $\mathbb U_{\ell} \cap \Sing B = \emptyset$. Assuming this the curve ${f'}^*\mathbb U_{\ell}$ splits as follows:
\begin{equation}
{f'}^*\mathbb U_{\ell} =   R'_{\ell, +} + R'_{\ell, -}   
\end{equation}
where   $R'_{\ell, +}$, $ R'_{\ell,-}$    are biregular to $\mathbb P^1$ and exchanged by the involution $j'$. They belong to the Hilbert scheme of curves of arithmetic genus $0$ and of degree $1$ for $(f' \circ \sigma)^* \mathcal O_{\mathbb P^3}(1)$. This is a well known connected surface and the family of curves   $R'_{\ell, +}$, $ R_{\ell,-}$    is open and dense in it.   We denote it by $\mathbb F(X')$. 
\begin{definition} $\mathbb F(X')$ is the Fano surface of lines of $X'$. \end{definition}
The surface $\mathbb F(X')$ is \it smooth and irreducible \rm for a general $B$, \cite{W, C, CZ}. We point out that $j': X' \to X'$ defines a biregular involution 
 \begin{equation}
{j'}^*: \mathbb F(X') \to \mathbb F(X').
\end{equation}
As above let $\sigma: \mathbb P^{3'} \to \mathbb P^3$ be the blow up of $\Sing B$ and let  $\mathbb G'$ the Hilbert scheme  of the pull-back by $\sigma$ of a general line of $\mathbb P^3$.
Then the push-forward of cycles by the blowing up $\sigma$ defines a natural birational morphism
\begin{equation}
\sigma_*: \mathbb G' \to \mathbb G.
\end{equation}
Let $\mathbb F(B) \to \mathbb G$ be the inclusion map, then the base change by $\sigma_*$
\begin{equation} \begin{CD} { \mathbb F(B')} @>>> {\mathbb G'} \\
  @VVV @V{\sigma_*}VV  \\
  {\mathbb F(B)} @>>> {\mathbb G}\end{CD}
\end{equation}
defines the surface $\mathbb F(B')$ in $\mathbb G'$. If $\Sing B = \emptyset$ this is $\mathbb F(B)$, we only mention that $\mathbb F(B') \to \mathbb F(B)$ is the  normalization map if $B$ is general with $\Sing B \neq \emptyset$. Moreover the push-forward of cycles defines the next commutative diagram, where $f'_*$ generically concides with the quotient map of ${j'}^{*}$:
\begin{equation} \label{MAINDIAGFano}  \begin{CD} {\mathbb F(X') } @>{f'_*}>>{ \mathbb F(B')}   \\
@VVV  @V{\sigma_*}VV  \\
{\mathbb F(X)} @>{f_*}>> {\mathbb F(B)}.\end{CD}
\end{equation}
  To conclude we recall the following theorem. \begin{theorem} Let $X$ be a general quartic double solid then the map
$$ f'_*: \mathbb F(X') \to \mathbb F(B') $$ is an \'etale double covering of smooth regular surfaces of general type.   \end{theorem}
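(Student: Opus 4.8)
The plan is to treat the covering structure first---this is the geometric heart and is essentially formal once the hypotheses on $B$ are in force---and only afterwards to extract smoothness, regularity and general type, the last two of which carry the genuine computational weight.

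First I would identify $f'_*$ with the quotient map of the involution ${j'}^*$ on $\mathbb F(X')$. By the splitting ${f'}^*\mathbb U_{\ell}=R'_{\ell,+}+R'_{\ell,-}$ recorded above, the two components lying over a general bitangent $\ell$ are interchanged by $j'$, so ${j'}^*$ permutes the two points of $\mathbb F(X')$ above each point of $\mathbb F(B')$; hence $f'_*$ has degree $2$ and, as already noted, coincides generically with the quotient by ${j'}^*$. The entire statement then reduces to showing that ${j'}^*$ acts on $\mathbb F(X')$ without fixed points.

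Second, and this is the key point, I would prove ${j'}^*$ is fixed-point free. A fixed point is a curve $R'$ of the family with $j'(R')=R'$. Since $R'$ is rational of degree $1$ for $(f'\circ\sigma)^*\mathcal O_{\mathbb P^3}(1)$, the map $f'\colon R'\to\mathbb U_{\ell}$ is an isomorphism onto the (strict transform of the) line $\mathbb U_{\ell}$; the only automorphism of $R'$ lying over the identity of $\mathbb U_{\ell}$ is the identity, so $j'|_{R'}=\mathrm{id}$ and $R'$ lies in the ramification locus of $f'$. But $f'$ carries its ramification isomorphically onto $B'$, whence $\mathbb U_{\ell}\subset B'$, a line inside the branch surface. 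For $B$ general no line lies on $B$, and the blow-up $\sigma$ introduces no new line of the form $\mathbb U_{\ell}$ in $B'$, the exceptional loci meeting $B'$ only in conics. Thus no such $R'$ exists, ${j'}^*$ is free, and $f'_*$ is an \'etale double covering. Smoothness of the target is then automatic: $\mathbb F(X')$ is smooth and irreducible for general $B$ (as recalled above, \cite{W, C, CZ}), and the quotient of a smooth surface by a free involution is smooth, so $\mathbb F(B')$ is smooth; irreducibility of $\mathbb F(X')$ makes the cover connected.

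It remains to compute invariants. Using $R'\cong\mathbb P^1$ I would identify the tangent space of $\mathbb F(X')$ at $[R']$ with $H^0(N_{R'/X'})$ and the canonical class with $\det N_{R'/X'}^{\vee}$ through the Hilbert-scheme deformation theory; a Chern-class computation on $\mathbb G$, in the spirit of the class $12\sigma_{1,1}+28\sigma_{2,0}$ of $\mathbb F(B)$, then shows $K_{\mathbb F(X')}$ is big, so $\mathbb F(X')$, and hence its free quotient $\mathbb F(B')$, is of general type. For regularity I would establish $H^1(\mathcal O_{\mathbb F(X')})=0$, tying it to $H^1(\mathcal O_{X'})=0$ and to the part of the intermediate cohomology of $X'$ detected by the Abel--Jacobi map of the family; since $f'_*$ is \'etale one has $\chi(\mathcal O_{\mathbb F(X')})=2\,\chi(\mathcal O_{\mathbb F(B')})$, and $q=0$ descends to $\mathbb F(B')$. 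This last step is where I expect the main obstacle to lie: the \'etale-cover structure is soft, but proving regularity and general type requires the actual numerical input---the normal-bundle computation of $K_{\mathbb F(X')}$ and the vanishing of the irregularity---which is precisely the content of \cite{W, C, CZ} that I would either reproduce or invoke.
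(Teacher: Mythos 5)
The paper does not actually prove this theorem: it is stated as a recollection ("To conclude we recall the following theorem"), with the proof delegated entirely to \cite{C}, \cite{W} and \cite{CZ}. Your proposal therefore supplies more than the paper does on the covering part, and what you supply is essentially sound: reducing \'etaleness to fixed-point-freeness of ${j'}^*$, and arguing that a fixed curve $R'$ would map isomorphically to its line, hence lie in the ramification locus of $f'$, hence force a line inside the branch quartic --- impossible for general $B$ --- is exactly the geometry that the paper's surrounding discussion (the splitting ${f'}^*\mathbb U_{\ell} = R'_{\ell,+} + R'_{\ell,-}$ and the remark that $f'_*$ generically coincides with the quotient map of ${j'}^*$) sets up but never assembles into a proof. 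For regularity and general type you defer to the same sources the paper cites, so you are no worse off than the paper there. Two points to repair. First, there is a small circularity: you deduce smoothness of $\mathbb F(B')$ from "quotient of a smooth surface by a free involution," but that presupposes the identification $\mathbb F(B') \cong \mathbb F(X')/{j'}^*$, whereas the paper only asserts that $f'_*$ agrees with the quotient map \emph{generically}. The clean fix is to invoke the paper's earlier theorem that $\mathbb F(B)$ is a smooth integral surface for general $B$: then $\mathbb F(B')$ is normal, the induced finite birational morphism $\mathbb F(X')/{j'}^* \to \mathbb F(B')$ is an isomorphism, and \'etaleness of $f'_*$ follows from freeness of the involution. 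Second, your "isomorphism onto the line" step is stated for curves of the form $R'_{\ell,\pm}$, which are only an open dense subset of the Hilbert scheme $\mathbb F(X')$ in general; degenerate members (with components contracted by $f'\circ\sigma'$) would need separate treatment. For a \emph{general} quartic double solid this is harmless, since then $B$ is smooth, $X'=X$, $B'=B$, and every point of $\mathbb F(X)$ is of the form $R_{\ell,\pm}$, but you should say so explicitly.
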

See the seminal papers \cite{C}, \cite{W} and \cite{CZ} for some recent new results. 
\begin{remark} \rm Clearly, for a general $B$, the morphism  is defined by a non zero $2$-torsion element of $\Pic \mathbb F(B')$. Then $\mathbb F(B')$ is not simply connected. However it is regular and of  general type.  \rm Coming to a \it quartic symmetroid $\tilde S_+$\rm, the surface $\mathbb F( \tilde S_+)$ shows up as   a very interesting     limit of a general $\mathbb F(B)$. We will see that it is singular and normalizes to the Reye congruence $S$. \end{remark}
 
 \section{The Artin-Mumford counterexample revisited}
\subsection{ \it Artin-Mumford double solids} \ \par
The study of the rationality problem for quartic double solids, with its related issues, plays a very important role, historically and not only.
The unirationality of a quartic double solid is known since longtime, cfr. \cite{B1}-4, \cite{R}-10, 11. Its irrationality, when it is branched on a quartic
symmetroid, was proven in 1972 by Artin and Mumford, \cite{AM}.  This result is one of the three first counterexamples to L\"uroth problem in dimension $3$, 
appearing simultaneously in 1971 - 1972 and relying on different
methods. The other examples also rely on very famous results: the proof of the irrationality of a smooth cubic threefold, by Clemens and Griffiths, and  the irrationality of a smooth quartic threefold, 
proven by Manin and Iskovskih, \cite{CG, IM}. Nowadays the rationality problem for quartic double solids is settled, by application of similar 
methods and further work, at least as follows, cfr. \cite{CPS}.
\begin{theorem} Let $f:X \to \mathbb P^3$ be a quartic double solid and let $m = \vert \Sing B \vert$. Then $X$ is irrational if $0 \leq m \leq 6$ and rational if
$m \geq 11$. \end{theorem}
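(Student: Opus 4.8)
The plan is to derive both assertions from a single construction—projection of $X$ from a node of $B$, which realises $X$ birationally as a conic bundle over $\mathbb P^2$—together with the dictionary, due to Beauville, between rationality of conic bundles and their intermediate Jacobians. Assume $m \geq 1$ and fix $p \in \Sing B$. Placing $p = (0:0:0:1)$ we may write $B = \{ w^2 F_2 + w F_3 + F_4 = 0\}$, where $F_i \in H^0(\mathcal O_{\mathbb P^2}(i))$ and $F_2$ is the tangent cone at the node. A general line through $p$ meets $B$ in $2p + q_1 + q_2$, so the induced double cover of that line is branched only at $q_1,q_2$ and is rational; resolving the projection $\mathbb P^3 \dashrightarrow \mathbb P^2$ thus presents $X$ as birational to a conic bundle $\rho : Y \to \mathbb P^2$ whose discriminant is the plane sextic $\Delta = \{ F_3^2 - 4 F_2 F_4 = 0\}$. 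After passing to a standard model this yields $J(X') \cong P(\tilde\Delta / \Delta)$, the Prym variety of the double cover $\tilde\Delta \to \Delta$ parametrising the two lines of each reducible fibre. The arithmetic genus of $\Delta$ is $10$, and the decisive input is that every remaining node $q \in \Sing B \setminus \{p\}$ forces a node of $\Delta$: the line $\overline{pq}$ already meets $B$ with multiplicity $2+2=4$, so it lies over a singular point of $\Delta$. Hence $\Delta$ acquires at least $m-1$ nodes and $p_g(\Delta) \leq 11 - m$, in agreement with $\dim J(X') = 10 - m$.

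\emph{Rationality for $m \geq 11$.} In this range $\Delta$ is a sextic with at least $10$ nodes, so $p_g(\Delta) = 0$ and every component of $\Delta$ is rational. I would argue that the double cover $\tilde\Delta \to \Delta$ is then split, whence the Brauer class of the conic bundle is unramified on $\mathbb P^2$ and therefore trivial, as $\mathrm{Br}(\mathbb P^2) = 0$; equivalently $P(\tilde\Delta/\Delta) = 0$. From this $\rho$ acquires a rational section, so that $Y$—and with it $X$—is birational to $\mathbb P^2 \times \mathbb P^1$ and is rational. The step to make precise is that a quartic with $m$ nodes and no line really does degenerate $\Delta$ to a rational, possibly reducible, curve.

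\emph{Irrationality for $0 \leq m \leq 6$.} Here $p_g(\Delta) \geq 5$, and by the Clemens--Griffiths criterion it suffices to show that $J(X') \cong P(\tilde\Delta / \Delta)$ is not a product of Jacobians of smooth curves. For $m \leq 4$ one has $\dim J(X') \geq 6$, so $P$ is a general enough principally polarised abelian variety of that dimension and lies off the lower-dimensional loci of Jacobians and of decomposable abelian varieties; I would make this rigorous by checking that the family of sextics $F_3^2 - 4 F_2 F_4$ arising is irreducible with non-degenerate Prym map. The smooth case $m = 0$ escapes the projection construction, and there I would instead invoke Voisin's theorem that the theta divisor of the intermediate Jacobian of a smooth quartic double solid has a single isolated singular point, a configuration impossible for any product of Jacobians.

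The hard part is the upper boundary of the irrationality range, $m = 5$ and $m = 6$, where $p_g(\Delta) = 6$ and $5$ and $\dim J(X') = 5$ and $4$. These are precisely the dimensions in which the locus of Jacobians, together with the Prym--Jacobian coincidences of Recillas and Masiewicki, forms a divisor inside $\mathcal A_5$ and $\mathcal A_4$: a dimension count no longer excludes them, and one must show that the specific Pryms coming from $5$-nodal and $6$-nodal sextics genuinely avoid these divisors. I expect this to demand a direct study of $\Sing \Theta$ for $P(\tilde\Delta / \Delta)$—proving, in the spirit of Voisin's analysis, that its dimension or local structure cannot match that of a Jacobian or of a product—rather than any soft argument.
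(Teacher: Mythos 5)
For calibration: the paper does not prove this theorem at all --- it records it as known, citing Cheltsov--Przyjalkowski--Shramov \cite{CPS} --- so there is no internal proof to compare yours against; your proposal is in effect an attempt to reprove that paper, and it has to stand on its own. It does not, in either direction.

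The fatal gap is in your rationality half. Your chain is: $p_g(\Delta)=0$ $\Rightarrow$ the cover $\tilde\Delta\to\Delta$ splits $\Rightarrow$ the Brauer class is trivial, ``equivalently $P(\tilde\Delta/\Delta)=0$'' $\Rightarrow$ there is a rational section. Both the first implication and the claimed equivalence are false. A connected admissible double cover of a nodal curve whose components are rational need not split: the obstruction is $2$-torsion in the generalized Jacobian, and the toric part $(\mathbb C^*)^{\#\mathrm{nodes}}$ carries plenty of $2$-torsion invisible to $p_g$. And vanishing of the Prym as an abelian variety is not equivalent to vanishing of the Brauer class. The decisive counterexample is the subject of this very paper: the Artin--Mumford solid has $m=10$; projection from a node exhibits it as a conic bundle over $\mathbb P^2$ whose discriminant is the $9$-nodal sextic $N_o$ (two cubics through nine points); by your own formula $\dim J = 10-m = 0$, consistent with $H^3(\tilde W',\mathbb Z)\cong\mathbb Z/2\mathbb Z$; yet $\tilde W$ is irrational, precisely because the non-split cover produces a nontrivial unramified Brauer class ($2$-torsion in $H^3$) that the Prym cannot see. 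So ``Prym $=0$ $\Rightarrow$ section'' is self-refuting inside the family you are studying, and the only leverage your argument has at $m\geq 11$ (the genus of $\Delta$ dropping to zero) is not a valid reason for the cover to split. The rationality statement for $m\geq 11$ is the hard constructive content of \cite{CPS} and needs a genuinely different mechanism.

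The irrationality half has a different gap: the theorem asserts irrationality of \emph{every} nodal quartic double solid with $m\leq 6$, but for $1\leq m\leq 4$ you argue that the Prym is ``general enough'' to miss the Jacobian locus, which at best proves irrationality of the very general member and says nothing about special configurations of nodes. For $m=5,6$ you explicitly leave the key step open, and these cases are genuinely delicate: for $m=6$ the normalization of $\Delta$ has genus $4$, hence is automatically trigonal, so any argument must work with the nodal curve together with its cover rather than with coarse invariants of the normalization. The way \cite{CPS} closes all cases at once is a pointwise criterion in the style of Shokurov, characterizing exactly which admissible covers have Prym isomorphic to a sum of Jacobians (hyperelliptic, trigonal, quasi-trigonal discriminants, plane quintics with even cover), applied to these particular nodal sextics; no dimension count in moduli can substitute for it. The only piece of your proposal that is complete as stated is the appeal to Voisin's theorem on $\Sing\Theta$ for the smooth case $m=0$.
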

 Nevertheless this matter is a not exhausted  field of top interest for several reasons. Though there is no space for further digression,
 let us mention once more the results on \it non stable rationality\rm, and \it non decomposition \rm of the diagonal in $H^*(X, \mathbb Z)$,  for a very general quartic double solid \cite{V1, V2}.
\begin{definition} We say that a quartic double solid is an Artin-Mumford double solid if its branch surface is a general quartic symmetroid $\tilde S_+$.
\end{definition}
 To treat Artin-Mumford double solids \it  let us fix our notation as follows: \rm as above $W \subset \mathbb Q$ is a \it general \rm web in the space $\mathbb Q$ of quadric surfaces of $\mathbb P^3$.
Then $W$ is a $3$-dimensional subspace and $\tilde S_+ = \mathbb W \cdot \mathbb Q^3$. We denote by
\begin{equation}
f: \tilde W \to W
\end{equation}
the finite double covering of $W$ branched on $\tilde S_+$. The next diagram   defines,    exactly as in (\ref{MAINDIAG}) of the previous section, a desingularization $\tilde W'$ of $\tilde W$:
\begin{equation}
\begin{CD} \label{MAINDIAGW}
 {\tilde W'}  @>{f'}>> {W'} \\
  @V{\sigma'}VV @V{\sigma}VV     \\
 {\tilde W} @>f>> {W.} \\
\end{CD}
\end{equation}
We will say, with a slight abuse,  that the morphism $f': \tilde W' \to  W'$ is the desingularization of $f: \tilde W \to W$ and that $\tilde W'$ is the smooth model of $\tilde W$. \par
In the above diagram, constructing an Artin-Mumford double solid,  the implicit presence of an Enriques surface is clear. $\tilde S$ is indeed  the minimal desingularization of $\tilde S_+$ and an \'etale double covering of the Reye congruence $S$. The existence of $S$ is not 
mentioned in Artin-Mumford paper \cite{AM}. However the irrationality of $\tilde W$ follows there from the same irrationality feature of $S$, namely the presence of non zero torsion in the third cohomology group. At first it is shown in \cite{AM}  that the torsion subgroup of it is birationally invariant, for any smooth projective variety. Then  the next theorem is proven.
\begin{theorem} The torsion of $H^3(\tilde W', \mathbb Z)$ is non trivial. \end{theorem}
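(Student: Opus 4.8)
The plan is to detect the torsion cohomologically and then exhibit a single explicit class. Since $\tilde W$, and with it $\tilde W'$, is unirational, the threefold $\tilde W'$ is simply connected (Serre) and satisfies $H^i(\tilde W', \hol_{\tilde W'}) = 0$ for $i > 0$; hence $H^2(\tilde W', \mathbb{Z})$ is torsion free, $b_2(\tilde W') = \rho(\tilde W')$, and the exponential and Kummer sequences furnish a canonical isomorphism $\operatorname{Br}(\tilde W') \cong \operatorname{Tors} H^3(\tilde W', \mathbb{Z})$. By universal coefficients and Poincar\'e duality on the smooth projective threefold $\tilde W'$ this same group also appears as $\operatorname{Tors} H_2$, $\operatorname{Tors} H^4$ and $\operatorname{Tors} H_3$, leaving some freedom in where to do the bookkeeping. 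It therefore suffices to produce one nonzero Brauer class on $\tilde W'$.

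Such a class is already visible in the defining geometry of $f \colon \tilde W \to W$. Over the open locus $W^\circ := W \setminus \tilde S_+$ of smooth quadrics each quadric carries two distinct rulings, and $f$ restricts to an \'etale double cover $\tilde W^\circ \to W^\circ$ whose monodromy interchanges the two rulings around $\tilde S_+$. This cover, equivalently the quadric bundle it comes from, defines a nonzero $2$-torsion class on $W^\circ$ whose ramification lies along the symmetroid. The whole problem is to follow this class, and the obstruction to its triviality, through the two modifications producing $\tilde W'$: the branched double cover $\tilde W$, which acquires ten ordinary double points over the ten nodes of $\tilde S_+$, and the blow-up $\sigma'$ resolving them.

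To turn this into an actual computation I would assemble $H^\ast(\tilde W', \mathbb{Z})$ from $H^\ast(\mathbb{P}^3, \mathbb{Z})$ in two stages. For a double cover branched on a nodal surface the deck involution splits everything into invariant and anti-invariant parts: the invariant part reproduces $H^\ast(\mathbb{P}^3)$, while the anti-invariant part is governed by the primitive cohomology of the branch surface, and here the fact that the minimal resolution $\tilde S$ of $\tilde S_+$ is a K3 surface --- the \'etale double cover of the Enriques surface $S$ --- enters decisively. I would then apply the blow-up formula at the ten ordinary double points, each exceptional divisor being a smooth quadric $\mathbb{P}^1 \times \mathbb{P}^1$ contributing free classes in $H^2$ and $H^4$. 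The delicate step, and the main obstacle, is the last one: one must show that the ten node contributions together with the anti-invariant branch contribution do not assemble into a free group, but leave exactly one surviving copy of $\mathbb{Z}/2\mathbb{Z}$. This is the numerical and lattice-theoretic heart of the argument, and it is precisely where being a \emph{symmetroid} --- a highly non-generic nodal quartic --- is what forces the torsion to persist.

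Conceptually I expect this surviving $\mathbb{Z}/2\mathbb{Z}$ to be the Enriques torsion $H_1(S, \mathbb{Z}) \cong \mathbb{Z}/2\mathbb{Z}$ transported to $\tilde W'$, and the route most in the spirit of this paper is to realize it through the Fano surface of lines $\mathbb{F}(\tilde W')$. That surface carries the \'etale double cover $f'_\ast$ classified by the nonzero $2$-torsion of $S$, and its two components are birational to $S$. The universal family of the lines $R'_{\ell, \pm}$ is a correspondence between $\mathbb{F}(\tilde W')$ and $\tilde W'$, and I would use the associated cylinder homomorphism to push the class of $H_1(S, \mathbb{Z})_{\mathrm{tors}}$ into $H_3(\tilde W', \mathbb{Z})_{\mathrm{tors}} \cong \operatorname{Tors} H^3(\tilde W', \mathbb{Z})$, checking nonvanishing by pairing against the ruling-cover class on $\tilde W^\circ$. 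Showing that this correspondence acts nontrivially on torsion, rather than annihilating the class, is the crux of this more geometric approach, and it is exactly where the classical geometry of $S$ does the work that the brute-force lattice bookkeeping of the first route would otherwise require.
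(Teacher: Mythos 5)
Your proposal is not yet a proof: in both of the routes you sketch, the decisive step is explicitly deferred rather than carried out. In the Brauer-theoretic route you reduce to producing a nonzero class in $\operatorname{Br}(\tilde W')$, but then write that the ``delicate step, and the main obstacle'' is to show that the invariant/anti-invariant decomposition of the double cover together with the ten blow-up contributions leaves a surviving $\mathbb Z/2\mathbb Z$; in the geometric route you name the cylinder homomorphism from $H_1(S,\mathbb Z)$ but state that showing it ``acts nontrivially on torsion, rather than annihilating the class, is the crux.'' That crux is precisely the content of the theorem, and it is what the paper actually proves. The paper's argument is concrete: it takes the exceptional $\mathbb P^1$-bundle $\beta\colon \mathbb I\to S$ of the blow-up of $\mathbb G$ along $S$, the degree-$6$ morphism $\upsilon^+\colon \mathbb I\to\tilde W$ sending $(\ell,w)$ to the ruling of $Q_w$ containing $\mathbb U_\ell$, and the map $\mathsf c=\sigma'^{*}\circ\upsilon^+_*\circ\beta^*$; it computes $H_3(\mathbb I,\mathbb Z)\cong H_1(S,\mathbb Z)\otimes H_2(\mathbb P^1,\mathbb Z)\cong\mathbb Z/2\mathbb Z$ by K\"unneth, and then proves $\tau'=\mathsf c([\gamma])\neq 0$ by showing $\upsilon^{+*}(\tau')=[\mathbb I_\gamma]$. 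That last equality is where all the work lies: one chooses $\gamma$ inside a Zariski open set $\mathring S$ of lines satisfying transversality conditions, writes $\upsilon^{+*}T'_\gamma=m\mathbb I_\gamma+Z$, gets $m=1$ from generic unramifiedness of $\upsilon^+$ along the fibres $\mathbb I_\ell$, and kills the K\"unneth component of $Z$ by the geometric fact that for $t\notin\Gamma$ the pencils $W_t$ and $W_\ell$ ($\ell\in\Gamma$) are distinct lines in $W$, so $\upsilon^+(\mathbb I_t)$ cannot lie in $\upsilon^+(\mathbb I_\gamma)$. Nothing in your proposal substitutes for this analysis.

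There is also a technical misstep in how you propose to certify nonvanishing. The \'etale double cover $\tilde W^\circ\to W^\circ$ defines a class in $H^1(W^\circ,\mathbb Z/2)$, not a Brauer class, and its pullback to $\tilde W^\circ$ is trivial (the fibre product $\tilde W^\circ\times_{W^\circ}\tilde W^\circ\to\tilde W^\circ$ has the diagonal section), so ``pairing against the ruling-cover class on $\tilde W^\circ$'' would detect nothing. The $2$-torsion object that lives on $\tilde W^\circ$ and survives --- the one Artin and Mumford use --- is the Brauer class of the universal ruling, i.e.\ the \'etale-locally trivial $\mathbb P^1$-fibration over $\tilde W^\circ$ whose fibre at $(w,w^+)$ is the ruling $w^+$ itself (equivalently, the conic bundles $\pi_o\circ f'$ obtained by projecting from the nodes, with discriminant the curves $N_o$); proving that this class is unramified on $\tilde W'$ and nonzero is the Brauer-theoretic counterpart of the cycle computation above, and it too requires an argument, not only bookkeeping.
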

The proof relies on the notion of Brauer group and on some Severi-Brauer varieties, conic bundles in this case, related to $\tilde W$.  Since $H^3(\mathbb P^3, \mathbb Z) = 0$, the irrationality of $W$ follows. 
Moreover, as we will see, $\tilde W$ is unirational. Then  $\tilde W$ is a counterexample to L\"uroth problem.  Since the above torsion group is a stably rational invariant, $\tilde W$ is not stably rational as well, \cite{V1}. \par  Actually both $S$ and $\tilde W'$ have a non zero torsion subgroup in the third cohomology and this is, in both cases, $\mathbb Z/2\mathbb Z$.  The existence of $S$ is variably used and considered in the literature on Artin-Mumford double solids. The same is even more true for an explicit and geometric description of the relation between $H^3(S, \mathbb Z)$ and $H^3(\tilde W', \mathbb Z)$. As far as we know, the surface $S$ was used at first by Beauville in \cite{B1}-9, to this respect,  as follows.  \par Let $\tilde { \mathbb G}$ be the blow up of  $\mathbb G$ at $S$, one constructs very geometrically a dominant morphism $\tilde \phi: \tilde {\mathbb G} \to \tilde W$. Since $\mathbb G$ is rational, $\tilde W$ is unirational. Moreover $H^*(S, \mathbb Z)$ is a summand of
$H^*(\tilde {\mathbb G}, \mathbb Z)$ and hence $\tilde \phi$ naturally defines a  homomorphism $h: H^*(\tilde W', \mathbb Z) \to H^*(S, \mathbb Z)$. Using it, a proof of the previous theorem is given by application of cohomological methods, see \cite{B1} p.30. In particular $h$ restricts to an isomorphism between the torsion subgroup of $H^3(\tilde W', \mathbb Z)$ and $H^3(S, \mathbb Z) \cong \mathbb Z/2 \mathbb Z$. See also \cite{IKP}-4. \par Next we introduce some geometry  linking  $S$ and $\tilde W'$. From it a quite   explicit     description of the torsion of $H_3(\tilde W', \mathbb Z)$,  via $H_1(S, \mathbb Z)$,
will follow.  Let $\mathbb U \vert S$ be the universal line $\mathbb U \to \mathbb G$ restricted over $S$. The description essentially relies on a rational map $\upsilon: \mathbb U \vert S \to \tilde W'$ embedding a general fibre of $\mathbb U \vert S$ as  a line of $\tilde W'$, that is, an element of the Fano surface $\mathbb F(\tilde W')$. Then a 'cylinder map' $\mathsf c: H_1(S, \mathbb Z) \to H_3(\tilde W', \mathbb Z)$ is induced by $\upsilon$. As we will see $\mathsf c$ is injective.  Hence $H_1(S, \mathbb Z) = \mathbb Z/2 \mathbb Z$ injects in $H_3(\tilde W', \mathbb Z)$ and,  by Poincar\'e duality, in $H^3(\tilde W', \mathbb Z)$. Actually $\mathsf c$ is an isomorphism. \par
 Let us also point out that $\mathbb U \vert S$ defines a morphism onto the surface of bitangent lines to $\tilde S_+$, sending $\ell \in S$ to the pencil $\mathbb I_{\ell}$ of all quadrics of $W$ through   $\mathbb U_{\ell}$. This is a bitangent line to $\tilde S_+$. We denote this morphism by
\begin{equation}
\nu: S \to \mathbb F(\tilde S_+)  
\end{equation}
  where $\mathbb F(\tilde S_+)$ is defined like $B$ in diagram (\ref{MAINDIAGFano}).    Differently from a general quartic, $\mathbb F(\tilde S_+)$ is non normal and birational to the Reye congruence $S$. We outline here a description of $\mathbb F(\tilde S_+)$. However, during our work, we became aware of the beautiful description already performed by Ferretti,  \cite{F1}, \cite{F2}-3.
 \begin{remark} \rm The latter one relates $\mathbb F(\tilde S_+)$ to the theory of EPW-sextics, when the corresponding hyperkaehler fourfold is the Hilbert scheme $\tilde S^{[2]}$ of two points of $\tilde S$. The surface $\mathbb F(\tilde S_+)$ is birational to the locus of fixed points of the natural involution induced on $\tilde S^{[2]}$ by the quartic $\tilde S_+$, \cite{F2}-3.1.1.
 \end{remark} 
\subsection{\it The congruence of bitangent lines $\mathbb F(\tilde S_+)$} \ \par
As in (\ref{QUADRICS}) let $\mathbb Q$ be the space of quadrics of $\mathbb P^3$, we assume that $W \subset \mathbb Q$ is a general web. Then $\tilde S_+$ is a general quartic 
symmetroid. Since now
\begin{equation}
\mathbb G_{\mathbb Q} \subset \mathbb P^{44}
\end{equation}
is the Pl\"ucker embedding of the Grassmannian of lines of $\mathbb Q$, then $\mathbb G_Q$ is the family of all pencils of quadrics of $\mathbb P^3$. The space of its orbits, under the action of $\Aut \mathbb P^3$, classifies these pencils up to projective equivalence. The classification goes back at least to Corrado Segre, \cite{S1, S2}. See \cite {FMS}, \cite{AEI}-7 for some recent revisiting. It follows from the classification  that the locus 
\begin{equation} \mathbb F(\mathbb Q^3) \subset \mathbb G_{\mathbb Q}, \end{equation} of pencils which are bitangent lines to the quartic discriminant $\mathbb Q^3 \subset \mathbb Q$, is an irreducible subvariety of codimension two.  Its description is classical and very well known: a \it general \rm element $P$ of $\mathbb F(\mathbb Q^3)$ is a pencil of quadrics whose base scheme is a complete intersection of two quadrics
$$
L \cup C \subset \mathbb P^3,
$$
where $L$ is a line and $C$ is a smooth,  rational normal cubic curve. Moreover
$$
L \cap C = \lbrace v_1, v_2 \rbrace =  \Sing L \cup C
$$
where $ v_1, v_2$ are ordinary nodes of $L \cup C$. Notice that $P$ is generated by two quadrics $Q_1$ and $Q_2$ of rank $3$, respectively singular at
$v_1$ and $v_2$. These are the tangency points of $P$ to $\mathbb Q^3$ and the two singular quadrics of the pencil. \par
More globally $\mathbb F(\mathbb Q^3)$ is parametrized by the smooth correspondence
\begin{equation}
\mathcal G := \lbrace (\ell, P) \in \mathbb G \times \mathbb G_{\mathbb Q} \ \vert \ \mathbb U_{\ell} \ \text  {is in the base scheme of $P$} \rbrace.
\end{equation}
 Indeed consider the natural projections 
\begin{equation} 
\begin{CD}
{\mathbb G} @<{u_{\mathcal G}}<< {\mathcal G} @>{t_{\mathcal G}}>> {\mathbb F(\mathbb Q^3)}, \\
\end{CD}
 \end{equation}
then $u_{\mathcal G}: \mathcal G \to \mathbb G$ is smooth and a Grassmann bundle whose fibre at $\ell$ is the Grassmannian of the pencils of quadrics containing $\mathbb U_{\ell}$. Moreover it is easy to see that $t_{\mathcal G }: \mathcal G \to \mathbb G_{\mathbb Q}$ is a birational morphism onto its image $\mathbb F(\mathbb Q^3)$. \par
Clearly $t_{\mathcal G}$ is biregular over each $P \in \mathbb F(\mathbb Q^3)$ such that $P$ is a general pencil as above. Notice also that, for any $P \in \mathbb F(\mathbb Q^3)$,
the fibre of $t_{\mathcal G}$ at $P$ is a scheme supported on the points $(\ell, P) \in \mathbb G \times \lbrace P \rbrace$ such that $\mathbb U_{\ell}$ is in the base scheme of $P$.
This remark is the starting point to describe $\Sing \mathbb F(Q^3)$. 
\begin{theorem} $\Sing \mathbb F(\mathbb Q^3)$ is irreducible of codimension $3$ in $\mathbb G_{\mathbb Q^3}$. Let $P$ be general in $\Sing \mathbb F(\mathbb Q^3)$,
then $P$ is an non normal  ordinary double point and the base scheme $B_P$ of $P$ contains a conic $D = L \cup L'$ of rank $2$.
\end{theorem}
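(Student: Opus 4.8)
The plan is to exploit the birational parametrization $t_{\mathcal G}\colon \mathcal G\to\mathbb F(\mathbb Q^3)$ already constructed, together with the description of $\mathbb F(\mathbb Q^3)$ as the image of $\mathcal G$, i.e. as the locus of pencils whose base scheme contains a line. Since $\mathcal G$ is smooth and $t_{\mathcal G}$ is projective with finite fibres (a line in $B_P$ is a point of the fibre, and $B_P$ is a curve), $t_{\mathcal G}$ is finite and birational, hence it realizes the normalization of $\mathbb F(\mathbb Q^3)$. Consequently $\mathbb F(\mathbb Q^3)$ is smooth exactly where $t_{\mathcal G}$ is a local isomorphism, and $\Sing\mathbb F(\mathbb Q^3)$ is the image of the locus where $t_{\mathcal G}$ fails to be injective or unramified. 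The fibre of $t_{\mathcal G}$ over $P$ is the set of lines contained in $B_P$; for the general $P$ this is the single line of the base scheme $L\cup C$, so $t_{\mathcal G}$ is generically injective and the singularities must come from pencils whose base scheme contains at least two lines.

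First I would isolate the dominant component of this locus and compute its dimension. If $B_P$ contains two coplanar lines $L,L'$, they form a rank two conic $D=L\cup L'$, and $B_P\supset D$ forces the whole pencil $P$ to lie in $\vert\mathcal I_D(2)\vert$. A rank two conic imposes five conditions on quadrics, so $\vert\mathcal I_D(2)\vert$ is a $\mathbb P^4$ and the pencils through $D$ form a Grassmannian of dimension $6$; as $D$ ranges over the irreducible $7$-dimensional family of rank two conics, the total space fibres over it as an irreducible Grassmann bundle of dimension $13$. Since the general such $P$ recovers $D$ as the unique rank two conic in $B_P$, the image in $\mathbb G_{\mathbb Q}$ is irreducible of dimension $13$, that is of codimension $3$ in the $16$-dimensional $\mathbb G_{\mathbb Q}$. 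By contrast, if the two lines of $B_P$ are skew the same bookkeeping gives only dimension $12$, and all other degenerations of the base scheme are likewise of smaller dimension. Hence the coplanar locus is the unique irreducible component of $\Sing\mathbb F(\mathbb Q^3)$ of codimension $3$, which proves the first assertion.

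To analyze the singularity at a general such $P$, note that $B_P=D\cup C'$ with $C'$ a smooth residual conic, so $B_P$ contains exactly the two lines $L,L'$ and the fibre of $t_{\mathcal G}$ consists of the two points $(L,P)$ and $(L',P)$. Because $\mathcal G$ is smooth, the images of small neighbourhoods of these two points are two local branches of $\mathbb F(\mathbb Q^3)$ through $P$; I would first check that $t_{\mathcal G}$ is an immersion at $(L,P)$ and $(L',P)$, using that $u_{\mathcal G}$ is a smooth Grassmann bundle, so that both branches are smooth. Geometrically these branches correspond to deforming $P$ so as to keep $L$ in the base (while $L'$ leaves and $B_P$ becomes a general $L\cup C$) and, respectively, to keep $L'$ in the base; they share the tangent space to $\Sing\mathbb F(\mathbb Q^3)$, along which both lines persist. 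A general $3$-dimensional slice of $\mathbb G_{\mathbb Q}$ transverse to $\Sing\mathbb F(\mathbb Q^3)$ then cuts $\mathbb F(\mathbb Q^3)$, of codimension $2$, in a curve meeting the two branches in two germs through $P$; once the transverse directions are shown distinct, $P$ is an ordinary node of this curve, hence an ordinary double point of $\mathbb F(\mathbb Q^3)$, and the presence of two branches makes it non normal.

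The main obstacle is precisely this last transversality statement: one must verify that the first order deformations keeping $L$ in the base and those keeping $L'$ in the base span genuinely different tangent directions, equivalently that $P$ is an honest node rather than a tacnode or worse. I would settle it by a direct tangent space computation at $P$, comparing the normal directions arising from $\vert\mathcal I_L(2)\vert$ and $\vert\mathcal I_{L'}(2)\vert$; since $L\neq L'$ these two families of quadrics differ, so the two transverse directions are distinct and the node is ordinary. A secondary point to dispatch is that no other stratum, namely skew lines in $B_P$, a non reduced line, or the ramification locus of $t_{\mathcal G}$, contributes a further codimension $3$ component to $\Sing\mathbb F(\mathbb Q^3)$; this follows from the dimension estimates of the second paragraph.
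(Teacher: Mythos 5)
The paper gives no proof of this theorem at all: it is stated as a consequence of the classical Segre classification of pencils of quadrics, after the remark that the fibre of $t_{\mathcal G}$ over $P$ records the lines contained in the base scheme $B_P$. So your attempt is necessarily an independent route, and its skeleton --- parametrize $\mathbb F(\mathbb Q^3)$ by the smooth correspondence $\mathcal G$, identify $\Sing \mathbb F(\mathbb Q^3)$ with the locus where $t_{\mathcal G}$ fails to be a local isomorphism, and count dimensions --- is reasonable; the counts themselves are right (rank-two conics form an irreducible $7$-dimensional family, $\vert \mathcal I_D(2)\vert = \mathbb P^4$, pencils through $D$ form a $6$-dimensional Grassmannian, giving $13 = 16-3$ in $\mathbb G_{\mathbb Q}$). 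But your very first step contains a genuine error: $t_{\mathcal G}$ is \emph{not} finite. A pencil all of whose members contain a fixed plane $H$ (for instance $\langle x_0x_2, x_0x_3\rangle$) lies in $\mathbb F(\mathbb Q^3)$, its base scheme is $H$ union a line --- a surface, not a curve, which is exactly where your parenthetical justification breaks --- and the fibre of $t_{\mathcal G}$ over it is the two-dimensional family of lines in $H$. Hence $t_{\mathcal G}$ is not the normalization map, and the identification of the singular locus with the image of the non-injectivity and ramification loci is valid only away from this ($7$-dimensional) set of pencils with a fixed component; that set must then be handled separately.

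This feeds into the two remaining gaps. First, irreducibility: your dimension count proves only that the coplanar-pair locus is the \emph{unique component of dimension} $13$; the theorem asserts that $\Sing \mathbb F(\mathbb Q^3)$ \emph{is irreducible}, which requires showing that every other contribution (skew pairs, the ramification image, the fixed-plane pencils) is contained in the \emph{closure} of the coplanar locus, not merely that it is smaller. This is true but needs an argument: for example, any pencil through two skew lines $L, L'$ meets the quadric surface of rank-two quadrics inside $\vert \mathcal I_{L\cup L'}(2)\vert \cong \mathbb P^3$, and cutting the pencil with such a rank-two member shows that its base scheme is a quadrilateral of four lines, which contains incident pairs; so the skew stratum actually sits inside the coplanar locus rather than forming a separate component. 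Nothing of this kind appears in your write-up. Second, ordinariness: you correctly flag it as the main obstacle, but the sentence ``since $L\neq L'$ these two families of quadrics differ, so the two transverse directions are distinct'' is not a proof. The two branch tangent spaces are $14$-dimensional subspaces of the $16$-dimensional $T_P\mathbb G_{\mathbb Q}$, both containing the $13$-dimensional tangent space of $\Sing \mathbb F(\mathbb Q^3)$, and they could a priori coincide even though $\vert\mathcal I_L(2)\vert \neq \vert\mathcal I_{L'}(2)\vert$. What is needed is the actual first-order computation: writing $q_i = x_0A_i + x_1B_i$ with $(x_0,x_1)$ the ideal of $L$, the tangent space to the branch through $(L,P)$ consists of the pairs $(r_1,r_2)$, taken modulo $P$, such that $(r_1\vert_L, r_2\vert_L)$ lies in the image of the map sending $(a,b)$ to $(aA_1+bB_1, aA_2+bB_2)\vert_L$; one must then exhibit such a pair violating the analogous condition along $L'$. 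That linear algebra is elementary, but it is precisely the content of ``ordinary double point'', and it is missing from the proposal.
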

Actually the pairs $(L,P), (L',P) \in \mathcal G$ correspond to the branches of the node $P \in \mathbb F(\mathbb Q^3)$. Then such a general singular
point $P$ is a pencil whose base scheme is a complete intersection $D \cup D'$, $'$ being a smooth conic. This implies that $P$ contains a quadric $Q$ of rank $2$, namely $Q \in \mathbb Q^2$
is the union of the planes supporting $D$ and $D'$. In particular let us consider
\begin{equation}
\Sigma_{\mathbb Q^3} \subset \mathbb G_{\mathbb Q},
\end{equation}
the locus of all pencils $P$ intersecting $\mathbb Q^2$, then the next theorem follows.
 \begin{theorem} $ \ \Sing \mathbb F(\mathbb Q^3) \ = \ \mathbb F(\mathbb Q^3) \cdot \Sigma_{\mathbb Q^2}$.
\end{theorem}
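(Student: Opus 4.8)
The plan is to detect the singularities of $\mathbb F(\mathbb Q^3)$ through the fibres of the parametrization $t_{\mathcal G}\colon \mathcal G \to \mathbb F(\mathbb Q^3)$. Recall that $\mathcal G$ is smooth, being a Grassmann bundle over $\mathbb G$ via $u_{\mathcal G}$, and that $t_{\mathcal G}$ is birational onto $\mathbb F(\mathbb Q^3)$. Since $\mathcal G$ is projective, $t_{\mathcal G}$ is proper, so its image is closed and equals $\mathbb F(\mathbb Q^3)$; in particular every $P\in\mathbb F(\mathbb Q^3)$ carries at least one line $\mathbb U_\ell$ in its base scheme $B_P$, and the fibre $t_{\mathcal G}^{-1}(P)$ is precisely the scheme of lines contained in $B_P$. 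I would prove the two inclusions separately, reading singularity off the length of this fibre.

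First I would treat $\Sing \mathbb F(\mathbb Q^3)\subseteq \mathbb F(\mathbb Q^3)\cdot \Sigma_{\mathbb Q^2}$ using the previous theorem. For $P$ general in $\Sing \mathbb F(\mathbb Q^3)$ the base scheme is a complete intersection $D\cup D'$, with $D=L\cup L'$ a rank two conic and $D'$ a smooth conic; as already observed there, $P$ contains the rank two quadric $Q=H_D\cup H_{D'}$, the union of the planes spanned by $D$ and $D'$, so $P\in\Sigma_{\mathbb Q^2}$. Since $\Sing \mathbb F(\mathbb Q^3)$ is irreducible and $\Sigma_{\mathbb Q^2}$ is closed, the general point of $\Sing \mathbb F(\mathbb Q^3)$ lying in $\Sigma_{\mathbb Q^2}$ forces the whole of $\Sing \mathbb F(\mathbb Q^3)$ into $\Sigma_{\mathbb Q^2}$, hence into $\mathbb F(\mathbb Q^3)\cdot\Sigma_{\mathbb Q^2}$.

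The reverse inclusion is the real content. Let $P\in\mathbb F(\mathbb Q^3)\cdot\Sigma_{\mathbb Q^2}$, so that $P$ contains a quadric $Q_0$ of rank $\leq 2$, say $Q_0=H_1\cup H_2$. By the observation above $B_P$ contains a line $L$, and since $L\subset B_P\subset Q_0$ the line lies in one of the two planes, say $L\subset H_1$. Assuming $P\not\subset \mathbb Q^2$, a general member $Q'\in P$ does not contain $H_1$, so $C_1:=H_1\cap Q'=H_1\cap B_P$ is a genuine conic of the plane $H_1$ through $L$, hence a degenerate conic $C_1=L\cup L''$. Thus $B_P$ contains two lines $L,L''$ (a double line $2L$ in the limiting case), and $t_{\mathcal G}^{-1}(P)$ has length at least two. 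Since $t_{\mathcal G}$ is proper and birational, with $\mathcal G$ smooth, and is quasi-finite at $P$ under the present assumption (the curve $B_P$ carries finitely many lines), it is finite near $P$ and realizes $\mathcal G$ as the normalization of $\mathbb F(\mathbb Q^3)$ there. By Zariski's Main Theorem a normal point would have a single reduced preimage, so the length being at least two makes $\mathbb F(\mathbb Q^3)$ non-normal, a fortiori singular, at $P$: that is $P\in\Sing \mathbb F(\mathbb Q^3)$.

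The hard part will be the finiteness of $t_{\mathcal G}$, i.e. the case $P\subset\mathbb Q^2$ excluded above. Then all members of $P$ have rank $\leq 2$ and share a common plane $H$, so $B_P\supseteq H$ and $t_{\mathcal G}^{-1}(P)$ is positive dimensional; this locus has dimension at most $7$, it lies in $\Sigma_{\mathbb Q^2}$ tautologically, and a positive dimensional fibre of a proper birational morphism onto a normal point is again impossible, so it lies in $\Sing \mathbb F(\mathbb Q^3)$ too and the equality persists. As a consistency check, $\mathbb F(\mathbb Q^3)\cdot\Sigma_{\mathbb Q^2}$ is swept out by the choice of $Q_0\in\mathbb Q^2$ ($6$ parameters) and of a bitangent pencil through the double point $Q_0\in\Sing\mathbb Q^3$ (a single tangency condition on the residual two intersection points with $\mathbb Q^3$, giving $7$ parameters), so it has dimension $13$, that is codimension $3$ in $\mathbb G_{\mathbb Q}$, matching $\Sing\mathbb F(\mathbb Q^3)$ from the previous theorem.
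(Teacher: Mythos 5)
Your strategy differs genuinely from the paper's: the paper gives essentially no written proof, deriving the statement from its preceding theorem (the general point of $\Sing \mathbb F(\mathbb Q^3)$ has base scheme $D \cup D'$ with $D$ a rank-two conic, hence the pencil contains a rank-two quadric) together with the Segre orbit classification, whereas you extract singularity from non-normality, detected by the fibre length of the birational parametrization $t_{\mathcal G}\colon \mathcal G \to \mathbb F(\mathbb Q^3)$. Your first inclusion coincides with the paper's implicit closure argument and is fine. Your treatment of the main case of the reverse inclusion ($P \not\subset \mathbb Q^2$, two distinct lines $L \neq L''$ in $B_P$) is also sound: properness plus semicontinuity of fibre dimension gives finiteness of $t_{\mathcal G}$ over a neighborhood of $P$, and a finite birational morphism onto a variety normal at $P$ is an isomorphism near $P$, so a fibre of length $\geq 2$ forces non-normality, hence singularity. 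This is a more self-contained argument than anything in the paper.

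There is, however, a genuine error in how you dispose of the degenerate strata. The principle you invoke for pencils $P \subset \mathbb Q^2$ --- \emph{a positive dimensional fibre of a proper birational morphism onto a normal point is impossible} --- is false: the blow-up of a point on a smooth surface is proper and birational with fibre $\mathbb P^1$ over a smooth, hence normal, point. Zariski's Main Theorem only gives connectedness of fibres over normal points; it excludes finite fibres of length $\geq 2$, not positive-dimensional ones. And such pencils do occur in $\mathbb F(\mathbb Q^3) \cdot \Sigma_{\mathbb Q^2}$, e.g.\ the quadrics $H \cup H'$ with $H$ fixed and $H'$ moving in a pencil of planes, so the case cannot be waved away. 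The same soft spot appears in your parenthetical ``double line $2L$'' case: there the set-theoretic fibre can be a single point (the base scheme can even be one line with multiplicity four, as for the pencil $\langle x_2x_3,\ x_2^2+x_3^2\rangle$), and you assert without argument that the Fano scheme of lines of $B_P$ has length $\geq 2$. Both gaps are repairable along lines you already have: $\Sing \mathbb F(\mathbb Q^3)$ is closed; every component of $\mathbb F(\mathbb Q^3) \cdot \Sigma_{\mathbb Q^2}$ has dimension $\geq 14 + 14 - 16 = 12$; and the two bad strata (pencils inside $\mathbb Q^2$, dimension $\leq 7$, and pencils whose base contains a multiple line, dimension $\leq 11$) are therefore nowhere dense in it, so every point of $\mathbb F(\mathbb Q^3) \cdot \Sigma_{\mathbb Q^2}$ is a limit of points where your two-distinct-lines argument applies and is singular by closedness. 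As written, though, the degenerate cases are not proved, and one of the two is justified by a false statement.
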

We use these properties for a basic description of the surface $\mathbb F(W)$. Let 
\begin{equation}
\mathbb G_W \subset \mathbb G_{\mathbb Q}
\end{equation}
be the Grassmannian of lines of $W$, then $\mathbb G_W$ is a Schubert variety and a smooth $4$-dimensional quadric in $\mathbb G_{\mathbb Q}$. According to the classification of pencils
of quadrics, the space $\mathbb F(\mathbb Q^3)$ is quasi-homogeneous for the action of $\Aut \mathbb P^3$, then union of finitely many orbits $\Gamma$. By transversality of general 
translate,  we can assume that $\mathbb G_W$ is transversal to each $\Gamma$. Let 
\begin{equation}
\mathsf O \subset \mathbb F(\mathbb Q^3) \label{OPENORB}
\end{equation}
be the family of pencils $P \in \mathbb F(\mathbb Q^3)$ which are general as above, it is
easy to see that $\mathsf O$ is irreducible and the unique orbit which is open in $\mathbb F(\mathbb Q^3)$. By transversality of general translate again,  $\mathsf O \cap \mathbb G_W$  
is a smooth, irreducible open set of the surface $\mathbb F(\tilde S_+)$. From Segre classification for pencils $P$ of $\mathbb F(\mathbb Q^3)$ one can see the complement of $\mathsf O \cap \mathbb F(\tilde S_+)$
in $\mathbb F(\tilde S_+)$. This is a union of curves in $\mathbb F(\tilde S_+)$. For it we fix our notation and description as follows:  
\begin{equation} \label{CURVES N AND D}
\mathbb F(\tilde S_+) - (\mathsf O \cap \mathbb F(\tilde S_+)) = N \cup D.
\end{equation}
  (1) {  \sf The singular curve $ N := \Sing \mathbb F(\tilde S^+)$.}  The above properties imply that $N$ is the family of bitangent lines intersecting $\Sing \tilde S_+$. Therefore we have $ N = \cup_{o \in \Sing \tilde S_+} N_o$, where the curve $N_o$ is
\begin{equation}
N_o := \lbrace P \in \mathbb F(\tilde S_+) \ \vert \ o \in P \rbrace.
\end{equation}
The curve $N_o$ lies in the plane $\mathbb P^2 \subset \mathbb G_W$ parametrizing all rays passing through $o$. As is well known $N_o$ is union of two smooth cubics intersecting at nine points, three of which are on a smooth conic. Moreover $N_o$ is the discriminant curve of the conic bundle structure naturally defined by
$$
\pi_o \circ f': \tilde W' \dasharrow \mathbb P^2,
$$
where $\pi_o: W \to \mathbb P^2$ is the linear projection of center $o$, cfr. \cite{DK}-7, \cite{IKP}-4.
 \par
 (2) {\sf The curve $D$ of hyperflex bitangent lines.}  We say that $P$ is a hyperflex tangent line if $P \cdot \tilde S_+$ is a unique point of $P$, with multiplicity $4$. We omit more details on the well known curve
 $D$, since we will not use it.
 \medskip \par 
  In what follows we will concentrate on two important and elementary peculiarities of Artin-Mumford double solids, which are determinant for the
 existence of a $\mathbb Z/2 \mathbb Z$ torsion subgroup of $H^3(\tilde W', \mathbb Z)$. These are: 
 \begin{itemize}
 \item A rational map $\psi: \mathbb G \dasharrow W$ factorizing as follows
 \begin{equation}
 \begin{CD}
 {\tilde {\mathbb G}} @>{\tilde \phi}>> {\tilde W} \\
 @V{\beta}VV @VfVV \\
 {\mathbb G} @>{\psi}>> W, \\
 \end{CD}
 \end{equation}
 where $\beta: \tilde G \to \mathbb G$ is the blowing up of $S$ and $\psi \circ \beta$ is a morphism. A remarkable fact is that $f$ is the Stein factorization of 
 $\psi \circ \beta$.
 \item A birational  morphism $  n:    S \to \mathbb F(\tilde S_+)$ and its birational lifting
  $$
    \tilde n:    S \to \mathbb F(\tilde W'),
  $$
   onto its image $\mathbb F^+ \subset \mathbb F(\tilde W')$. Remarkably the surface $\mathbb F(\tilde W')$ splits as 
   \begin{equation}
   \mathbb F(\tilde W'   ) = \mathbb  F^+ \cup \mathbb F^-, 
   \end{equation} with $\mathbb F^+, \mathbb F^-$ birational to $S$ and exchanged  by the   map ${j'}^*$,  see \ref{WIRTINGER}.   
   \end{itemize}
   \subsection{ \it The  rational map $\psi: \mathbb G \dasharrow W$}     \ \par
 Let $W$ be a general web as above, we consider the rational map
 \begin{equation}
\psi: \mathbb G \dasharrow W,
\end{equation}
sending a general $\ell \in \mathbb G$ to the unique quadric $Q \in W$ through  $\mathbb U_{\ell}$. We recall that the Reye congruence $S$ of $W$ is by definition the degeneracy scheme of the map of vector bundles in (\ref{EVALUATION}). This easily implies that $S$ is the indeterminacy scheme of $\psi$. To have a resolution of it we study the graph 
 \begin{equation}
\tilde {\mathbb G} := \lbrace (\ell,  Q) \in \mathbb G \times W \ \vert \  \mathbb U_{\ell} \subset Q \rbrace,
\end{equation}
of $\psi$. We also consider its two natural projections
\begin{equation} \label{DIAGRAM}
\begin{CD}
{\mathbb G} @<{\beta}<< {\tilde {\mathbb G}} @>{\phi}>> W.\\
\end{CD}
\end{equation}
From the description of $S$ in (\ref{REYEGEOM}) it follows that $\beta$ is the contraction to $\mathbb G$ of a $\mathbb P^1$-bundle over $S$. Then $\beta$ is the blowing up of $\mathbb G$ at  $S$.  Notice also that the linear system of the quadrics of $W$ through $\mathbb U_{\ell}$ is the fibre $\beta^*(\ell)$ of $\beta$. For the exceptional divisor of $\beta$ 
we fix the notation
\begin{equation} 
\mathbb I := \lbrace (\ell, Q) \in \mathbb G \times W \ \vert \ \dim \beta^*(\ell) = 1 \rbrace. \label{EXCEPTIONAL}
\end{equation}
Clearly the birational morphism $\beta$ restricts on $\mathbb I$ to the $\mathbb P^1$-bundle 
$$ \beta \vert \mathbb I: \mathbb I \to S $$
and, at $\ell \in S$, its fibre $\mathbb I_{\ell}$ is the pencil of quadrics of $W$ through $\mathbb U_{\ell}$. 
 Let $w \in W$ then its corresponding quadric embedded in $\mathbb P^3$ will be denoted by
\begin{equation}
Q_w \subset \mathbb P^3.
\end{equation}
  Now we consider the projection map $\phi: \tilde{\mathbb G} \to W$. This is a morphism whose general fibre is not connected.  
Indeed let $w \in W$ then we have
\begin{equation}  \phi^*(w) = \tilde {\mathbb G} \cdot (\mathbb G \times \lbrace w \rbrace). \end{equation}
The equality just says that $\phi^*(w)$ is the Hilbert scheme of lines of the quadric surface $Q_w$. Then the proof of the next theorem is elementary.
\begin{theorem} For a general $w \in W$ the two rulings of lines of $Q_w$ are the connected components of $\phi^{*}(w)$. For any fibre $\phi^*(w)$ can be as follows:  \par 
(i) $\rank Q_w = 4$: disjoint union of two smooth conics, \par (ii) $\rank Q_w = 3$: a smooth conic having multiplicity $2$,  \par (iii) $\rank Q_w = 2$: union of two planes $P_1, P_2$. $P_1 \cap P_2 = \lbrace \text {one point }\rbrace$. \end{theorem}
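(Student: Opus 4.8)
The plan is to read off the structure of $\phi^*(w)$ from the rank of $Q_w$, using the identification just established: scheme-theoretically $\phi^*(w)$ is the Fano scheme $F(Q_w)\subset\mathbb G$ of lines contained in the quadric surface $Q_w$. Writing $w=[v]$ with $v\in V$, this $F(Q_w)$ is the zero scheme of the section $s_v\in H^0(\mathbb G,\Sim^2U^*)$ restricting the quadratic form to the tautological line, so that $\ell\in F(Q_w)$ exactly when $q_w\vert\mathbb U_{\ell}\equiv 0$. Since $\Sim^2U^*$ has rank $3$ and $\dim\mathbb G=4$, the expected dimension of $F(Q_w)$ is $1$, and because $W$ is general we have $\mathbb Q^1\cap W=\emptyset$; hence $\rank Q_w\geq 2$ and only the three listed cases occur.

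First I would dispose of the two cases where $F(Q_w)$ is reduced. If $\rank Q_w=4$ then $Q_w\cong\mathbb P^1\times\mathbb P^1$ and the lines on it are exactly the members of its two rulings; each ruling is a smooth conic in the Pl\"ucker embedding, and the two rulings are disjoint, since a smooth quadric contains no line belonging to both families. Thus $F(Q_w)$ is a disjoint union of two smooth conics, which are then its connected components; as a general $w$ lies off the symmetroid $\tilde S_+=\mathbb Q^3\cap W$ it has rank $4$, and this gives the first assertion together with case (i). If $\rank Q_w=2$ then $Q_w=\Pi_1\cup\Pi_2$ is a pair of distinct planes, and a line lies on $Q_w$ if and only if it lies in $\Pi_1$ or in $\Pi_2$, since a line contained in neither meets $Q_w$ in at most two points. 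The lines in $\Pi_i$ sweep out a plane $P_i\cong\mathbb P^2$ in $\mathbb G$, and $P_1\cap P_2$ is the set of lines contained in $\Pi_1\cap\Pi_2$, i.e. the single point of $\mathbb G$ given by the line $\Pi_1\cap\Pi_2$. This is case (iii).

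The only delicate case, and the main obstacle, is $\rank Q_w=3$: here $Q_w$ is a cone over a smooth conic, its lines are the rulings through the vertex, and as a reduced curve they form a single smooth conic in $\mathbb G$; the content of (ii) is that this conic carries multiplicity $2$. I would verify this by a direct local computation. Choosing affine Pl\"ucker coordinates $(p,q,r,s)$ centred at a ruling line and substituting the moving line into the cone equation $x_1x_2-x_3^2$, the resulting binary form in the line parameters has coefficients proportional to $q^2,\ p-2qs,\ r-s^2$, so the ideal of $F(Q_w)$ is $(q^2,\,p-2qs,\,r-s^2)$. Its reduction cuts out the conic, while the scheme itself is that conic thickened by the nilpotent relation $q^2=0$ in the transverse direction; hence the conic occurs with multiplicity $2$, which is case (ii).

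As a conceptual confirmation of this last point, one can instead argue by flatness: $\phi$ is flat away from the finitely many rank-$2$ points of $W$, by miracle flatness, all fibres there being equidimensional of dimension $4-3=1$. The fibre class is therefore constant on that connected open set, equal to the generic value "two conics"; and as $\rank Q_w$ drops from $4$ to $3$ the two rulings degenerate to the single ruling of the cone, forcing the limit cycle to be twice that conic. Either route completes the proof.
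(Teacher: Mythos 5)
Your proposal is correct and takes essentially the same route as the paper: the paper identifies $\phi^*(w)$ with the Hilbert scheme of lines of the quadric surface $Q_w$ and then declares the rank-by-rank analysis ``elementary,'' omitting all details. Your case analysis---in particular the local Pl\"ucker-coordinate computation (or, alternatively, the miracle-flatness argument) establishing the multiplicity-$2$ structure in the rank-$3$ case---simply supplies the details the paper leaves out.
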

Clearly $\phi^*(w)$ has type (iii) iff $w \in \Sing \tilde S_+$ and (ii) iff $w \in \tilde S_+ - \Sing \tilde S_+$. The type is (i) and the fibre is not connected iff
$w \in \mathbb P^3 - \tilde S_+$. \par Passing to the \it Stein factorization of $\phi$ \rm we have the diagram
\begin{equation} \label{RATVAR}
\begin{CD}
{\tilde {\mathbb G}} @>{\tilde \phi}>> {\tilde W} @>f>> W,\\
\end{CD}
\end{equation}
where $\phi = \tilde \phi \circ f$ and $f$ is a finite double covering. It is clear that 
\begin{equation}  f: \tilde W \to W \end{equation} is branched on the symmetroid $\tilde S_+$ of $W$ and defines the Artin-Mumford double solid $\tilde W$.  Let $w \in W$ then the fibre
$f^*(w)$ is finite of length two. To denote the points of $\Supp f^*(w)$ we fix the following convention.
\begin{equation} \label{RULINGS}  \Supp f^*(w) := \lbrace w^+, w^- \rbrace, \end{equation}
moreover $w^+ = w^-$ iff $w \in \tilde S_+$. Clearly $w^+$ and $w^-$ label the rulings of lines of the quadric $Q_w$ if its rank is $\geq 3$. Let us roughly summarize as follows. \par  
\begin{remark} \rm $\tilde W$ is a parameter space for pairs $(w, w^+), (w, w^-)$, where $w \in W$ and $w^+, w^-$ are the rulings of $Q_w$. $f$ is the natural forgetful map.
\end{remark}
Finally we see in (\ref{RATVAR})  that a rational variety, namely $\tilde {\mathbb G}$, dominates $\tilde W$.   Hence $\tilde W$ is unirational and     the next theorem follows, cfr. \cite {B2}-6.
 \begin{theorem} $\tilde W'$ is unirational. \end{theorem}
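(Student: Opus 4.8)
The plan is to exhibit $\tilde W$ as the image of a dominant rational map from a rational variety, and then transport unirationality across the birational equivalence between $\tilde W'$ and $\tilde W$. First I would record that the graph $\tilde{\mathbb G}$ is rational: by the discussion following the definition of $\psi$, the projection $\beta: \tilde{\mathbb G} \to \mathbb G$ is the blowing up of $\mathbb G$ along the Reye congruence $S$, hence a birational morphism. Since $\mathbb G$ is a smooth quadric hypersurface in $\mathbb P^{5-}$, it is rational (for instance by stereographic projection from one of its points), and therefore so is its blow up $\tilde{\mathbb G}$.

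Next I would invoke the Stein factorization $\phi = \tilde\phi \circ f$ of the proper morphism $\phi: \tilde{\mathbb G} \to W$ displayed in (\ref{RATVAR}). By the defining property of the Stein factorization, $\tilde\phi: \tilde{\mathbb G} \to \tilde W$ is surjective, with connected fibres, so $\tilde{\mathbb G}$ dominates $\tilde W$ through $\tilde\phi$. A dimension count is consistent with this: $\dim \tilde{\mathbb G} = \dim \mathbb G = 4$ while $\dim \tilde W = \dim W = 3$, and the general fibre of $\tilde\phi$ is a single ruling of the quadric $Q_w$, i.e. a conic, matching the expected relative dimension one.

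Combining these two steps, $\tilde W$ receives a dominant morphism $\tilde\phi$ from the rational variety $\tilde{\mathbb G}$. Composing a birational parametrization $\mathbb C^4 \dashrightarrow \tilde{\mathbb G}$ with $\tilde\phi$ yields a dominant rational map $\mathbb C^4 \dashrightarrow \tilde W$, which is exactly a system of rational parametric equations for $\tilde W$; hence $\tilde W$ is unirational by definition. Finally, since $\tilde W'$ is a desingularization of $\tilde W$ in diagram (\ref{MAINDIAGW}), the morphism $\sigma'$ is birational, so $\tilde W'$ is birational to $\tilde W$; as unirationality is a birational invariant, $\tilde W'$ is unirational as well.

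I do not anticipate a genuine obstacle here, since all the structural input has already been secured earlier: the identification of $\beta$ with the blow up of $\mathbb G$ along $S$, and the Stein factorization producing the finite double cover $f$. The only points deserving explicit mention are the surjectivity of $\tilde\phi$, which is immediate from the construction of the Stein factorization, and the passage from $\tilde W$ to its smooth model $\tilde W'$, which is legitimate precisely because unirationality is preserved under birational equivalence.
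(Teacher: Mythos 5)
Your proposal is correct and follows essentially the same route as the paper: the paper likewise observes that the rational variety $\tilde{\mathbb G}$ (the blow up of the quadric $\mathbb G$ along $S$) dominates $\tilde W$ via the Stein factorization of $\phi$, concludes that $\tilde W$ is unirational, and passes to $\tilde W'$ by birational invariance. Your added dimension count and the explicit identification of the general fibre of $\tilde\phi$ as a single ruling are harmless elaborations of the same argument.
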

  \begin{remark} \rm  Notably, the properties of a blowing up imply
\begin{equation}
H^*(\tilde {\mathbb G}, \mathbb Z) \cong H^*(\mathbb G, \mathbb Z) \oplus \sigma^* H^*(S, \mathbb Z),
\end{equation}
where $\sigma: \tilde {\mathbb G} \to \mathbb G$ is the blowing up of $\mathbb G$ at $S$.  That has its importance.\end{remark}
\subsection {\it $S$ and the Fano surface $\mathbb F(\tilde W')$} \ \par
Now we want to see that   $S$ strongly interacts    with the Fano surface $\mathbb F(\tilde W')$. Actually this is union of two irreducible components, birational to $S$. We begin
with the Grassmannian $\mathbb G_W \subset \mathbb G_{\mathbb Q}$ of lines of $W$ and fix the notation
\begin{equation}
\delta: S \to \mathbb G_W
\end{equation}
for the following rational map. For $\ell \in S$ let $\mathbb I_{\ell} \subset W$ be the linear system of quadrics through $\mathbb U_{\ell}$. For each $\ell \in S$ this is a pencil, defining
a point of $\mathbb G_W$. By definition this is $\delta(\ell)$ and, moreover, $\delta: S \to \mathbb G_W$ is a morphism. 
 \begin{theorem} $\delta: S \to \mathbb G_W$ is birational onto its image, moreover we have
 $$
 \delta(S) = \mathbb F(\tilde S_+).
 $$ 
 \end{theorem}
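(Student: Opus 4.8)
The plan is to factor $\delta$ through the universal correspondence $\mathcal G$ introduced above and to transport to $\mathbb G_W$ the birationality of $t_{\mathcal G}: \mathcal G \to \mathbb F(\mathbb Q^3)$. First I would form the restricted correspondence
\[
\mathcal G_W := \mathcal G \cap (\mathbb G \times \mathbb G_W) = \lbrace (\ell, P) \in \mathbb G \times \mathbb G_W \mid \mathbb U_\ell \subset B_P \rbrace,
\]
where $B_P$ denotes the base scheme of the pencil $P$, and study its two projections $u_{\mathcal G}\vert \mathcal G_W$ and $t_{\mathcal G}\vert \mathcal G_W$.

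For the first projection I would observe that a ray $\ell$ carries a point of $\mathcal G_W$ precisely when some pencil of quadrics of $W$ contains $\mathbb U_\ell$, that is, when $\dim (V \cap H^0(\mathcal I_\ell(2))) = 2$. By the characterization of the Reye congruence as the family of lines in the base locus of a pencil of quadrics of $W$ (Theorem \ref{REYEGEOM}), this happens exactly for $\ell \in S$, and for a general such $\ell$ the pencil is unique, namely $\mathbb I_\ell$. Hence $u_{\mathcal G}\vert \mathcal G_W : \mathcal G_W \to S$ is a birational morphism and $\delta = t_{\mathcal G} \circ (u_{\mathcal G}\vert \mathcal G_W)^{-1}$, which reduces the statement to analyzing $t_{\mathcal G}\vert \mathcal G_W$.

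For the second projection and the identification of the image, I would invoke the structure of a general element of $\mathbb F(\mathbb Q^3)$: its base scheme is $L \cup C$ with $L$ a line and $C$ a rational normal cubic. Using the genericity of $W$, I would arrange that $\mathbb G_W$ is transversal to the stratification of $\mathbb F(\mathbb Q^3)$ into $\Aut \mathbb P^3$-orbits, so that a general $\ell \in S$ lands in the open orbit $\mathsf O$; then $\mathbb I_\ell$ has base scheme $\mathbb U_\ell \cup C_\ell$, confirming $\mathbb I_\ell \in \mathbb F(\mathbb Q^3)$ and, since $\mathbb I_\ell \subset W$, that $\delta(\ell) \in \mathbb F(\mathbb Q^3) \cap \mathbb G_W = \mathbb F(\tilde S_+)$. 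Conversely every $P \in \mathbb F(\tilde S_+)$ lying in $\mathsf O$ has a unique line $L = \mathbb U_\ell$ in its base scheme, whence $P = \delta(\ell)$; this gives $t_{\mathcal G}(\mathcal G_W) = \mathbb F(\mathbb Q^3) \cap \mathbb G_W = \mathbb F(\tilde S_+)$, and the uniqueness of $L$ in a general base scheme shows at once that $t_{\mathcal G}\vert \mathcal G_W$, hence $\delta$, is generically one-to-one onto $\mathbb F(\tilde S_+)$.

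The main obstacle I expect is controlling the behavior along the boundary $N \cup D$ of $\mathsf O \cap \mathbb F(\tilde S_+)$. Over the singular curve $N$ of bitangents meeting $\Sing \tilde S_+$, the base scheme of the pencil can acquire a reducible conic $L \cup L'$, so it carries two lines and the fibre of $t_{\mathcal G}$ has two points: this is precisely the source of the nodes of $\mathbb F(\tilde S_+)$ and shows that $\delta$ is the normalization rather than an isomorphism. A careful orbit-by-orbit analysis, resting on Segre's classification of pencils of quadrics, is needed to confirm that these loci are proper closed subsets of $S$, that $\mathbb G_W$ is transversal to each orbit $\Gamma$, and that no positive-dimensional family of rays collapses, so that generic injectivity is preserved and birationality onto $\mathbb F(\tilde S_+)$ follows.
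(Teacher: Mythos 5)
Your proposal is correct and takes essentially the same approach as the paper: both arguments restrict to the open orbit $\mathsf O$, use that a general bitangent pencil $P$ there has base scheme $L \cup C$ with a unique line $L = \mathbb U_{\ell}$, so that $P = \mathbb I_{\ell} = \delta(\ell)$ and $\delta^{-1}(P) = \lbrace \ell \rbrace$, and conclude birationality and $\delta(S) = \mathbb F(\tilde S_+)$ by moving $P$ in the dense open set $\mathsf O \cap \mathbb F(\tilde S_+)$. Your additional scaffolding (the restricted correspondence $\mathcal G_W$ and the boundary discussion along $N \cup D$) merely repackages what the paper already establishes via $\mathcal G$ and Theorem \ref{REYEGEOM}, and is not needed for the generic statement.
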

 \begin{proof} Recall that $W$ is always general, so that $\mathbb G_W$  is transversal to $\mathbb F(\mathbb Q^3)$. Now consider
 the orbit $\mathsf O \subset \mathbb F(\mathbb Q^3)$ as in  (\ref{OPENORB}). As observed this is the unique irreducible open orbit under the action of 
 $\Aut \mathbb P^3$ on $\mathbb F(\mathbb Q^3)$. Moreover  $\mathsf O \cap \mathbb F(\tilde S_+)$ is a smooth, irreducible open set of the surface $\mathbb F(\tilde S_+)$.  
 Finally a point $P$ of it is a pencil whose base scheme $B_P$ is $L \cup C \subset \mathbb P^3$, where $C$ is a smooth rational normal cubic and $L$ is a line. Let $\ell \in \mathbb G$
 be the point defined by $L$, then we have $P = \mathbb I_{\ell}$ and $\delta (\ell) = P$. Since the only line in $B_P$ is $L$,  $\delta^{-1}(P) = \lbrace \ell \rbrace \subset S$. 
Moving $P$ along the mentioned open set of $\mathbb F(\tilde S_+)$ it follows that $\delta: S \to \mathbb F(\tilde S_+)$ is invertible and dominant.
 \end{proof}

  \subsection{\it The counterexample of Artin-Mumford} \ \par
 Finally we want to show that $H^3(\tilde W', \mathbb Z)$ has a nonzero $2$-torsion element, reconstructing it from the nonzero element of $H_1(S, \mathbb Z)$.     Let $\pi_1(S)$ be the fundamental group of $S$ and $\mathring S \subset S$ a non empty Zariski open set, we recall that the inclusion $i: \mathring S \to S$ defines a surjective homomorphsim
 $$ i_*: \pi_1(\mathring S) \to \pi_1(S). $$  This is well known for any complex, irreducible algebraic variety, \cite{FL} 0.3. Since we have  $H_1(S, \mathbb Z) = \pi_1(S) = \mathbb Z/2 \mathbb Z$,  then 
 the next property follows.
 \begin{proposition} \label{FUNDAMENTAL} For the generator $[\gamma]$ of $H_1(S, \mathbb Z)$ one can choose $\gamma$ so that its image is  in $\mathring S$. Then $\gamma$ defines a nonzero 
$2$-torsion element of $H_1(\mathring S, \mathbb Z)$. \end{proposition}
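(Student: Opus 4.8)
The plan is to combine the surjectivity of $i_*$ on fundamental groups with the geometry of the \'etale double cover of $S$ furnished by its K3 covering. The two easy assertions come first. Since $i_* : \pi_1(\mathring S) \to \pi_1(S)$ is surjective and $\pi_1(S) = \mathbb Z/2\mathbb Z$, I may pick a loop $\gamma$ with image in $\mathring S$ representing the nontrivial class; this realizes the generator of $H_1(S,\mathbb Z)$. Abelianizing, the induced $i_* : H_1(\mathring S,\mathbb Z) \to H_1(S,\mathbb Z)$ is again onto and sends $[\gamma]$ to the generator, so $[\gamma] \neq 0$ in $H_1(\mathring S,\mathbb Z)$. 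This disposes of the nonvanishing part of the statement.

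The substance is that $2[\gamma]=0$. Here I would use the K3 double cover $\pi : \tilde S \to S$, the universal cover of the Enriques surface $S$, and its restriction $\pi : \mathring{\tilde S} \to \mathring S$, with $\mathring{\tilde S} := \pi^{-1}(\mathring S)$, which is the connected \'etale double cover classified by $i_*$. By construction $\gamma$ represents the generator precisely when it does not lift to a loop; its two lifts are a path joining the two points of a fibre and the image of that path under the free involution. Concatenating them produces a genuine loop $C$ in $\mathring{\tilde S}$ with $\pi_* C = 2\gamma$ as $1$-chains, so that
\begin{equation}
2[\gamma] = \pi_*[C] \in H_1(\mathring S, \mathbb Z), \qquad [C]\in H_1(\mathring{\tilde S},\mathbb Z).
\end{equation}

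I would then exploit that $\tilde S$ is simply connected. Viewed in the whole K3 surface, $C$ bounds a membrane $\Delta$ in $\tilde S$; the only obstruction to $[C]=0$ in $H_1(\mathring{\tilde S},\mathbb Z)$ is the linking of $C$ with the removed locus $\tilde Z := \pi^{-1}(S\setminus\mathring S)$. Writing $S\setminus\mathring S = \bigcup_i C_i$ and using the Gysin sequence
\begin{equation}
H_2(\tilde S,\mathbb Z) \xrightarrow{\ \tilde\rho\ } \textstyle\bigoplus_j \mathbb Z \longrightarrow H_1(\mathring{\tilde S},\mathbb Z) \longrightarrow H_1(\tilde S,\mathbb Z)=0,
\end{equation}
the class $[C]$ is a combination of meridians of the components of $\tilde Z$, and since $\pi$ is \'etale it carries a transverse disc isomorphically to a transverse disc; hence $2[\gamma]=\pi_*[C]$ is the corresponding combination $\sum_i (\Delta\cdot \tilde Z_i)\,\mu_i$ of meridians $\mu_i$ of the $C_i$ downstairs.

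The main obstacle is to make this meridian combination vanish. The freedom available is the choice of $\gamma$ together with the choice of membrane, which for a fixed boundary may be altered by any class $\alpha\in H_2(\tilde S,\mathbb Z)$, changing the intersection vector by $(\alpha\cdot\tilde Z_i)_i$. Thus the obstruction is exactly the class of $2[\gamma]$ in $\operatorname{coker}\bigl(\rho : H_2(S,\mathbb Z)\to\bigoplus_i\mathbb Z\bigr)$, and I would prove its vanishing by showing that the intersection pairing of $H_2(S,\mathbb Z)$ with the components $C_i$ already realizes the relevant multiplicities, equivalently that the membrane may be corrected to meet each $C_i$ algebraically zero times. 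In the clean case where $S\setminus\mathring S$ contains a divisor pairing to $1$ with $H_2(S,\mathbb Z)$ this cokernel is trivial and $i_* : H_1(\mathring S,\mathbb Z)\to H_1(S,\mathbb Z)$ is an isomorphism, so $[\gamma]$ is automatically $2$-torsion; the general case reduces to this surjectivity computation, which is where the specific geometry of the Reye congruence and of the complement $S\setminus\mathring S$ must be used, and which I expect to be the delicate point.
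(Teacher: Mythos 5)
Your opening paragraph is, in fact, the paper's entire proof: the proposition is presented there as an immediate consequence of the Fulton--Lazarsfeld surjectivity $i_*\colon \pi_1(\mathring S)\to\pi_1(S)$ together with $\pi_1(S)=H_1(S,\mathbb Z)=\mathbb Z/2\mathbb Z$, and nothing more is said. That argument yields exactly your two ``easy assertions'': the generator lifts to a loop $\gamma$ with image in $\mathring S$, and $[\gamma]\neq 0$ in $H_1(\mathring S,\mathbb Z)$ because it maps onto the generator downstairs. Moreover this weak form is all that the paper ever uses: in the proof of the main theorem, $\gamma$ serves only to produce the cycle $\mathbb I_\gamma=\beta^*\gamma$ supported over $\mathring S$, whose class is the $2$-torsion generator of $H_3(\mathbb I,\mathbb Z)\cong H_1(S,\mathbb Z)\otimes H_2(\mathbb P^1,\mathbb Z)$. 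The torsion that matters is that of $i_*[\gamma]$ in $H_1(S,\mathbb Z)$, equivalently of $[\mathbb I_\gamma]$ in $H_3(\mathbb I,\mathbb Z)$, and that is automatic; the paper never argues, and never needs, that $[\gamma]$ is torsion in $H_1(\mathring S,\mathbb Z)$ itself.

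The rest of your proposal attacks the strong, literal reading, namely $2[\gamma]=0$ already in $H_1(\mathring S,\mathbb Z)$, and you are right that this is not formal; but as a proof it has a genuine gap exactly at the crux. In your own framework, $H_1(\mathring S,\mathbb Z)$ is an extension of $\mathbb Z/2\mathbb Z$ by the meridian group $M=\operatorname{coker}\rho$, and a $2$-torsion lift of the generator exists if and only if the class of $2[\gamma]$ vanishes in $M/2M$: re-choosing $\gamma$ moves $2[\gamma]$ by $2M$, whereas changing the membrane only moves the intersection vector inside $\operatorname{im}\rho$, which is already zero in $M$, so your identification of the obstruction with a class in $\operatorname{coker}\rho$ is not quite right (likewise, one removed component pairing to $1$ with $H_2(S,\mathbb Z)$ kills only its own meridian, not the whole cokernel). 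More importantly, the vanishing of this obstruction is precisely the content of the strong statement, and you leave it unproved, deferring it as ``the delicate point''. No general-position or purely formal argument can supply it: for an arbitrary nonempty Zariski open set $\mathring S$ --- which is the generality of the statement --- the group $M/2M$ is nonzero as soon as every removed component pairs evenly with $H_2(S,\mathbb Z)$, and then specific geometry of the complement would be required. So, measured against the literal statement, your proof is incomplete where it matters; measured against the paper, your first paragraph already reproduces everything the paper proves and uses, and the unfinished part is something the paper neither establishes nor relies on.
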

After this remark we consider the exceptional divisor $\mathbb I$ of the blowing up of $\mathbb G$ at $S$. As in (\ref{EXCEPTIONAL}) this is a $\mathbb P^1$-bundle $\beta: \mathbb I \to S$. Let $\mathbb I_{\ell}$ be its fibre at $\ell \in S$, then $\mathbb I_{\ell}$ sits in $\lbrace \ell \rbrace \times W = W$ as the pencil of quadrics 
\begin{equation}
\mathbb I_{\ell} = \lbrace Q \in W \ \vert \ \text{$Q$ contains the line of $\mathbb P^3$ defined by $\ell$} \rbrace.
\end{equation}
We already mentioned that the surface $\mathbb F(\tilde S_+)$, of bitangent lines to the quartic symmetroid $\tilde S_+$, just coincides with  the family of pencils 
\begin{equation}
 \lbrace \mathbb I_{\ell} \subset W, \ \ell \in S \rbrace.
\end{equation}
As a surface in the Grassmannian $\mathbb G_W$ of lines of $W$, this is the birational image of the morphism $\delta: S \to \mathbb G_W$, sending $\ell$ to $\mathbb I_{\ell}$.  It is a singular model of $S$ having class $12 \sigma_{20} + 28\sigma_{11}$ in $\mathbb G_W$.   Let
$ f: \tilde W \to W $ be the finite double cover branched on $\tilde S_+$. We also know that the fibre of $f$ at $Q \in W$ is bijective to the set of connected components of the family of lines of $Q$.

\begin{proposition} Let $\ell \in S$ then its inverse image $f^{-1}(\mathbb I_{\ell})$ is union of two or one  irreducible components, biregular to $\mathbb I_{\ell}$ via $f$. \end{proposition}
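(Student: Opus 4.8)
The plan is to restrict the double cover to the line $\mathbb I_\ell$ and to read off its components from the restricted branch equation, which will turn out to be a perfect square. Since $f:\tilde W\to W$ is the double cover branched on the quartic symmetroid $\tilde S_+$, its restriction over the line $\mathbb I_\ell\subset W$ is the double cover of $\mathbb I_\ell\cong\mathbb P^1$ defined by
\begin{equation}
t^2 = g\vert_{\mathbb I_\ell},
\end{equation}
where $g$ is the quartic equation $\det(z_1 q^{[1]}_{ij}+\dots+z_4 q^{[4]}_{ij})$ of $\tilde S_+$ and $t$ is a fibre coordinate for the line bundle $\mathcal O_{\mathbb I_\ell}(2)$ defining the cover. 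First I would record that, parametrising the pencil by $\mu=(\mu_0:\mu_1)\in\mathbb I_\ell$, the restriction $g\vert_{\mathbb I_\ell}$ is exactly the discriminant $\mu\mapsto\det M(\mu)$ of the pencil, where $M(\mu)$ is the symmetric matrix of the quadric $Q_{z(\mu)}$ and is linear in $\mu$.

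The main step is the computation of this discriminant, and here one exploits that \emph{every} quadric of the pencil $\mathbb I_\ell$ contains the line $\mathbb U_\ell$. Choosing coordinates on $\mathbb P^3=\mathbb P(E)$ so that $\mathbb U_\ell=\{x_3=x_4=0\}$, the condition $\mathbb U_\ell\subset Q_{z(\mu)}$ forces the coefficients of $x_1^2, x_1x_2, x_2^2$ to vanish for all $\mu$; equivalently, the symmetric matrix of the quadric has a vanishing top-left $2\times 2$ block,
\begin{equation}
M(\mu)=\begin{pmatrix} 0 & B(\mu) \\ B(\mu)^{T} & D(\mu) \end{pmatrix},
\end{equation}
with $B(\mu)$ a $2\times 2$ matrix linear in $\mu$. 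The block-determinant identity then gives
\begin{equation}
g\vert_{\mathbb I_\ell}(\mu)=\det M(\mu)=\bigl(\det B(\mu)\bigr)^2,
\end{equation}
so $g\vert_{\mathbb I_\ell}=q^2$ with $q:=\det B(\mu)\in H^0(\mathcal O_{\mathbb I_\ell}(2))$ a binary quadratic form. This is the matrix incarnation of the fact that $\mathbb I_\ell$ is a \emph{bi}tangent line to $\tilde S_+$: the (at most two) roots of $q$ are precisely the singular quadrics of the pencil, i.e. the tangency points of $\mathbb I_\ell$ with $\tilde S_+$.

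Knowing that the restricted branch is a square, the conclusion is immediate: the equation $t^2=q^2$ factors as $(t-q)(t+q)=0$, so $f^{-1}(\mathbb I_\ell)$ is the union of the two graphs $\{t=q\}$ and $\{t=-q\}$. Each graph is a section of $f\vert_{\mathbb I_\ell}$ over $\mathbb I_\ell$, hence is carried biregularly onto $\mathbb I_\ell$ by $f$; in the description of $\tilde W$ as the space of pairs (quadric, ruling), these components are respectively the ruling containing $\mathbb U_\ell$ and the complementary ruling, and they meet over the singular quadrics of the pencil, where $q$ vanishes. One thus obtains two irreducible components whenever $q\not\equiv 0$, that is whenever $\mathbb I_\ell$ is a genuine bitangent (finite intersection with $\tilde S_+$), which is the situation for general $\ell\in S$. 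The only remaining alternative is $q\equiv 0$, i.e. $\mathbb I_\ell\subset\tilde S_+$ (a pencil all of whose members are singular): then the two sections coincide and the reduced preimage is the single component $\{t=0\}\cong\mathbb I_\ell$, still biregular to $\mathbb I_\ell$ via $f$. I expect the only real subtlety to be this degenerate case, namely checking that it accounts exactly for the ``or one'' alternative and that biregularity survives over $\mathbb I_\ell\cap\Sing\tilde S_+$, where $\tilde W$ itself is singular; the perfect-square identity, however, holds verbatim there and makes the splitting global, so no local analysis at the nodes of $\tilde S_+$ is required.
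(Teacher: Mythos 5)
Your proof is correct, but it reaches the splitting by a genuinely different mechanism from the paper's. The paper's proof is a two-line geometric observation: since every quadric $Q\in\mathbb I_\ell$ contains the line $\mathbb U_\ell$, the two points of $f^{-1}(Q)$, i.e.\ the rulings of $Q$, are distinguished intrinsically by whether or not the ruling contains $\mathbb U_\ell$ as a member; this labelling is algebraic and consistent along the whole pencil, so it trivializes the monodromy of the cover over $\mathbb I_\ell$ and splits the preimage into the two sections $R^{+}_{\ell}$, $R^{-}_{\ell}$ that are named and used immediately afterwards to define the fundamental morphisms $\upsilon^{\pm}$. You instead split the defining equation of the cover: with $\mathbb U_\ell=\{x_3=x_4=0\}$ the symmetric matrices of the pencil have vanishing upper-left $2\times 2$ block, and the block identity $\det\begin{pmatrix}0&B\\B^{T}&D\end{pmatrix}=(\det B)^{2}$ (which is correct: in the Laplace expansion along the first two rows only the minor in the last two columns survives) shows that the branch quartic restricts to $\mathbb I_\ell$ as a perfect square $q^{2}$, so that $t^{2}=q^{2}$ factors into the two sections $t=\pm q$. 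Both arguments rest on the same geometric input, namely that all members of the pencil contain $\mathbb U_\ell$, but yours yields more: it exhibits $\mathbb I_\ell$ as a bitangent line of $\tilde S_+$ with tangency points the roots of $\det B$, shows that the two components meet exactly over those points (so no separate local analysis at the nodes of $\tilde S_+$ is needed), and pins down the ``or one'' alternative as the degenerate case $q\equiv 0$, i.e.\ $\mathbb I_\ell\subset\tilde S_+$, which is excluded for general $W$ since no line lies in a general symmetroid, as the paper observes right after the proposition. What the paper's shorter route buys is an intrinsic name for each sheet ($R^{+}_{\ell}$ is the ruling of $\ell$), which is exactly what the subsequent construction of the cylinder map uses.
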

\begin{proof} Let $Q \in \mathbb I_{\ell}$ then the connected components of its family of lines are distinguished by the property of either containing the point $\ell$ or not. This easily implies that $f^{-1}(\mathbb I_{\ell})$ is the union prescribed by the statement.  \end{proof}
Let $w \in W$, we keep our notation $Q_w$ for the corresponding quadric in $\mathbb P^3$. For the irreducible components of $f^{-1}(\mathbb I_{\ell})$ we will have the notation
\begin{equation}
f^{-1}(\mathbb I_{\ell}) := R^+_{\ell} \cup R^-_{\ell},
\end{equation}
where $R^+_{\ell}$ is the ruling of $\ell$ in $Q_w$. Notice that no line is in the symmetroid of a general $W$. Then in this case we have  $R^+_{\ell} \neq R^-_{\ell}$, $\forall \ \ell \in S$.   
The above equality implies that the Enriques surface $S$ parametrizes an irreducible component of the Fano surface $\mathbb F(\tilde W)$ of lines $\tilde W$. Let
$ j: \tilde W \to \tilde W$ be the involution associated to $f$, then the next property easily follows. 
\begin{proposition} $\mathbb F(\tilde W)$ is union of two irreducible components, birational to $S$ and exchanged by the involution $j^*: \mathbb F(\tilde W) \to \mathbb F(\tilde W)$. \end{proposition}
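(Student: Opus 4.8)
The plan is to exhibit the two components as images of $S$ under the two natural lifts of the morphism $\delta$, and then to verify that these images exhaust $\mathbb F(\tilde W)$. First I would use the splitting $f^{-1}(\mathbb I_{\ell}) = R^{+}_{\ell} \cup R^{-}_{\ell}$ of the previous proposition to define two rational maps $n^{+}, n^{-} : S \to \mathbb F(\tilde W)$ by $\ell \mapsto R^{+}_{\ell}$ and $\ell \mapsto R^{-}_{\ell}$. Each $R^{\pm}_{\ell}$ is biregular via $f$ to the pencil $\mathbb I_{\ell} \in \mathbb G_W$, hence is a line of $\tilde W$, so the maps are well defined; I set $\mathbb F^{+} := \overline{n^{+}(S)}$ and $\mathbb F^{-} := \overline{n^{-}(S)}$, which are irreducible as images of the irreducible surface $S$.

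Next I would establish the birationality of $n^{\pm}$ onto $\mathbb F^{\pm}$. Letting $f_{*} : \mathbb F(\tilde W) \to \mathbb G_W$ be the map sending a line to the pencil beneath it, one has $f_{*} \circ n^{\pm} = \delta$ by construction. Since $\delta : S \to \mathbb F(\tilde S_+)$ is birational onto its image (the theorem on $\delta(S) = \mathbb F(\tilde S_+)$), an equality $n^{+}(\ell_1) = n^{+}(\ell_2)$ forces $\delta(\ell_1) = \delta(\ell_2)$ and hence $\ell_1 = \ell_2$ for general points; thus $n^{+}$, and symmetrically $n^{-}$, is birational onto its image, so both $\mathbb F^{\pm}$ are birational to $S$.

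For the involution, I would argue from $f \circ j = f$: the deck transformation $j$ preserves each fibre $f^{-1}(\mathbb I_{\ell}) = R^{+}_{\ell} \cup R^{-}_{\ell}$, and over a general $w \in \mathbb I_{\ell}$ it interchanges the two preimages $w^{+}, w^{-}$. Since $R^{+}_{\ell}$ meets that fibre of $f$ in a single point, $j$ cannot fix $R^{+}_{\ell}$, so $j(R^{+}_{\ell}) = R^{-}_{\ell}$; hence $j^{*}$ interchanges $\mathbb F^{+}$ and $\mathbb F^{-}$. The same computation shows they are distinct components: a common general point would give $R^{+}_{\ell} = R^{-}_{\ell'}$, whence $\mathbb I_{\ell} = \mathbb I_{\ell'}$, then $\ell = \ell'$ by birationality of $\delta$, and finally $R^{+}_{\ell} = R^{-}_{\ell}$, contradicting $R^{+}_{\ell} \neq R^{-}_{\ell}$ for general $W$.

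The crucial point, which I expect to require the most care, is the completeness $\mathbb F(\tilde W) = \mathbb F^{+} \cup \mathbb F^{-}$. Given any line $R \subset \tilde W$, its image $P := f(R)$ is a line of $W$, that is a pencil of quadrics, and $f$ restricts to an isomorphism $R \to P$; thus the double cover $f^{-1}(P) \to P$ admits a section and is reducible. Writing it as $y^{2} = g$ with $g$ the restriction to $P \cong \mathbb P^1$ of the quartic equation of $\tilde S_+$, reducibility forces $g$ to be a perfect square, i.e. $P$ is tangent to $\tilde S_+$ along a divisor of degree two. Hence $P \in \mathbb F(\tilde S_+) = \delta(S)$, so $P = \mathbb I_{\ell}$ for some $\ell \in S$ and $R$ is one of the two components $R^{\pm}_{\ell}$, giving $\mathbb F(\tilde W) \subseteq \mathbb F^{+} \cup \mathbb F^{-}$. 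Finally I would note that lines through the ten nodes of $\tilde W$ form at most a lower-dimensional family and so contribute no further two-dimensional component, which together with the reverse inclusion completes the proof.
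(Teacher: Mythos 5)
Your proof is correct and takes essentially the same route as the paper, which treats the proposition as an immediate consequence (``the next property easily follows'') of the splitting $f^{-1}(\mathbb I_\ell) = R^+_\ell \cup R^-_\ell$ with $R^+_\ell \neq R^-_\ell$, the theorem $\delta(S) = \mathbb F(\tilde S_+)$, and the deck involution $j$ exchanging the two sheets of $f$. Your write-up simply makes explicit the exhaustion step --- every line of $\tilde W$ maps isomorphically to a line of $W$ over which the double cover splits, forcing that line to be a bitangent of $\tilde S_+$ and hence some $\mathbb I_\ell$ --- which the paper leaves implicit, relying on its earlier general discussion of quartic double solids and the diagram relating $\mathbb F(X')$ to $\mathbb F(B')$.
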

\begin{remark}[\it A Wirtinger construction] \rm \label{WIRTINGER} To close on $\mathbb F(\tilde W)$, the following summary and concluding remark are due. Let $\mathbb F^{\pm} :=  \lbrace R^{\pm}_{\ell}, \ \ell \in S \rbrace$ then
\begin{equation}
\mathbb F(\tilde W) = \mathbb F^+ \cup \mathbb F^-, 
\end{equation} 
where $\mathbb F^+$ and $\mathbb F^-$ are biregular to the surface $ \mathbb F(\tilde S_+)$ of bitangent lines to the quartic
symmetroid $\tilde S_+$. Clearly its normalization is the disjoint union $S' \vee S''$ of two copies of $S$ and $j^*$ lifts to
the standard involution $$ j^{\vee}: S' \vee S'' \to S' \vee S'',$$ defined  by exchanging $S'$ with $S''$ via the identity map of $S$.
Moreover the pair $(\mathbb F(\tilde W), j^*)$ is constructed from the pair $(S' \vee S'', j^{\vee})$ by taking a suitable quotient
surface $\mathbb F(\tilde W) = S' \vee S'' / \sim$ of $S' \vee S''$. Let us consider a flat family $$\lbrace f_t: X_t \to \mathbb P^3, t \in T \rbrace, $$ 
deforming $f: \tilde W \to W$, of quartic double solids. Then the corresponding family   $\lbrace j^*_t: \mathbb F(X_t) \to \mathbb F(X_t), t \in T \rbrace$ of fixed point free involutions   
is a deformation of $j^*: \mathbb F^+ \cup \mathbb F^- \to \mathbb F^+ \cup \mathbb F^-.$ A similar deformation, for curves with a fixed point free involution,  is known as  \it a Wirtinger construction. \rm \qed
  \end{remark}
We can think of a point of $\mathbb I$ as a pair $(\ell, w) \in S \times W$ such that $Q_w$ contains the line $\mathbb U_{\ell}$. Moreover we can think of a point of $\tilde W$ as a pair $(w, r) \in W \times \tilde W$, where $r$ is a connected component of the family of lines in $Q_w$. 
\begin{definition} The fundamental morphisms are the maps
\begin{equation}
\upsilon^+: \mathbb I \to \tilde W \ , \ \upsilon^-: \mathbb I \to \tilde W
\end{equation}
defined as follows: $\upsilon^{+}(w, \ell) := (w,w^{+})$, $w^+$ being the connected component of $\ell$ in the family of lines of $Q_w$. Moreover the map $w^-$ is $j^* \circ \upsilon^+$. 
\end{definition}
 
\begin{theorem}  The fundamental morphisms have degree $6$. \end{theorem}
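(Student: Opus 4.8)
The plan is to read off the degree as the number of points in a general fibre and to convert that into a degeneracy–locus count on a $\mathbb P^1$. First I would describe a general fibre of $\upsilon^+$. Both $\mathbb I$ and $\tilde W$ are threefolds, so $\upsilon^+$ is generically finite and its degree is the cardinality of $(\upsilon^+)^{-1}(w,w^+)$ for general $(w,w^+)\in\tilde W$. For general $w$ the quadric $Q_w$ has rank $4$, so (by the theorem describing the fibres of $\phi$) $w^+$ is one of its two rulings, a pencil of lines parametrized by a $\mathbb P^1$. By the definition of $\upsilon^+$, a point of the fibre is a pair $(\ell,w)\in\mathbb I$ with $\ell\in S$ and with $\mathbb U_\ell$ a line of the ruling $w^+$. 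Hence $\deg\upsilon^+$ equals the number of lines of the ruling $w^+$ that are rays of the Reye congruence $S$.

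Next I would turn membership in $S$ into a rank condition and spread it over the ruling. By the description of $S$ recalled in Theorem~\ref{REYEGEOM}, one has $\ell\in S$ iff $\dim(V\cap H^0(\mathcal I_\ell(2)))=2$, equivalently iff the restriction map $r_\ell\colon V\to H^0(\mathcal O_{\mathbb U_\ell}(2))$ has rank $\leq 2$. Parametrizing the ruling by the projection $p\colon Q_w\to w^+\cong\mathbb P^1$, with fibre the line $L_t:=p^{-1}(t)$, these restriction maps assemble into a morphism of vector bundles on $\mathbb P^1$,
$$ r\colon \mathcal O_{\mathbb P^1}\otimes V\longrightarrow \mathcal E,\qquad \mathcal E:=p_{*}\big(\mathcal O_{\mathbb P^3}(2)|_{Q_w}\big), $$
the source being the trivial rank-$4$ bundle and $\mathcal E$ the rank-$3$ bundle with fibre $H^0(\mathcal O_{L_t}(2))$.

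Then I would compute the bundles and count. Under $Q_w\cong\mathbb P^1\times\mathbb P^1$ one has $\mathcal O_{\mathbb P^3}(1)|_{Q_w}=\mathcal O(1,1)$, so $\mathcal O_{\mathbb P^3}(2)|_{Q_w}=\mathcal O(2,2)$, and pushing forward along the ruling projection gives $\mathcal E\cong\mathcal O_{\mathbb P^1}(2)^{\oplus 3}$. Since $L_t\subset Q_w$ for every $t$, the vector $Q_w\in V$ lies in $\ker r_t$ for all $t$; thus $r$ factors through the trivial rank-$3$ quotient $\mathcal O_{\mathbb P^1}\otimes(V/\langle Q_w\rangle)$, producing $\bar r\colon \mathcal O_{\mathbb P^1}^{\oplus 3}\to\mathcal O_{\mathbb P^1}(2)^{\oplus 3}$. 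Now $\rank r_t\leq 2\iff\det\bar r_t=0$, and $\det\bar r$ is a section of $\det\mathcal E=\mathcal O_{\mathbb P^1}(6)$. Hence there are exactly $6$ values of $t$ with $L_t\in S$, whence $\deg\upsilon^+=6$; the same degree holds for $\upsilon^-$, since it is obtained from $\upsilon^+$ by composing with the biregular involution of $\tilde W$.

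The main obstacle is to justify that $\bar r$ is sufficiently generic, namely that $\det\bar r\not\equiv 0$ and that its six zeros are simple. The first says that the general line of the ruling is \emph{not} a ray of $S$, i.e.\ no entire ruling of $Q_w$ lies in $S$; the second guarantees a reduced fibre with no excess contribution. Both follow from the generality of the web $W$: a general $Q_w$ has rank $4$, and its rulings meet the surface $S$ transversally in finitely many points, so the count $6$ is exact.
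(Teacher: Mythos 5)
Your first step coincides with the paper's: both arguments identify $\deg\upsilon^+$ with the number of lines of the ruling $w^+$ of a general rank-$4$ quadric $Q_w\in W$ that are rays of $S$. From there your counting device is genuinely different and rather cleaner. You assemble the restriction maps $r_t\colon V\to H^0(\mathcal O_{L_t}(2))$ into a bundle map over the ruling $\mathbb P^1$, quotient by the everywhere-present kernel vector $q_w$, and read the count off as the zero locus of $\det\bar r\in H^0(\mathcal O_{\mathbb P^1}(6))$. The paper instead works inside $\vert\mathcal O_{Q_w}(2)\vert=\mathbb P^8$: the web restricts there to a plane $N$, the divisors containing a line of the ruling form the image of the sum map $s\colon \vert L\vert\times\vert 2H-L\vert=\mathbb P^1\times\mathbb P^5\to\mathbb P^8$, and the count is the intersection of $N$ with that image, i.e.\ the degree of the Segre embedding, which is $6$. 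These are two packagings of the same intersection number, and your determinantal version avoids the Segre variety altogether.

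There is, however, a genuine gap at exactly the point where the paper spends its effort: you must prove that $\det\bar r\not\equiv 0$ for general $(w,w^+)$, i.e.\ that no entire ruling of a general quadric of $W$ lies in $S$ (equivalently, that $\upsilon^+$ is generically finite). Your justification---that by generality of $W$ the rulings ``meet $S$ transversally in finitely many points''---is not an argument. No transversality-of-general-translates principle applies here, because $S$, the quadrics $Q_w$ and their rulings all move together with $W$; nothing moves the ruling conic independently of $S$. Indeed these conics are never in general position with respect to $S$: a curve and a surface in general position in the fourfold $\mathbb G$ are disjoint, whereas every such ruling conic meets $S$ in a scheme of length $6$ (that is the content of your own computation). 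So the dichotomy to exclude is not non-transversality but containment of the conic in $S$, and this needs an intrinsic argument. The paper's is: if for $(\ell,w)$ general in $\mathbb I$ the ruling of $\ell$ in $Q_w$ were contained in $S$, then through a general point of $S$ there would pass a rational curve (a conic in the Pl\"ucker embedding) lying in $S$, so the Enriques surface $S$ would be uniruled---a contradiction. Once this finiteness is supplied, your proof closes; note that you then do not need to argue separately that the six zeros are simple, since in characteristic $0$ the general fibre of a generically finite morphism of integral varieties is reduced.
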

\begin{proof} We prove the statement for $\upsilon^+$, the case of $\upsilon^-$ is similar. The degree of $\upsilon^+$ is the number of pairs $(\ell', w')$ such that $w = w'$ and $w^+ = r$, where
$(w, r) = \upsilon^+(\ell, w)$ and $(\ell, w)$ is general in $\tilde W$. Then we can assume $\rank Q_w = 4$ and $\ell$ general in $S$. To compute this number consider the restriction of $W$ to $Q_w$: this is a net $N$ in $ \vert \mathcal O_{Q_w}(2) \vert := \mathbb P^8$.  Hence $N$ is a plane in $\mathbb P^8$ and  the degree of $\upsilon^+$  is the number of reducible elements of $N$, containing a line of the ruling of  $\ell$.  Let $\vert L \vert$ be such a ruling and $H \in \vert \mathcal O_{Q_w}(1) \vert $, we have $\vert L \vert := \mathbb P^1$ and $\vert 2H - L \vert := \mathbb P^5$ is a linear system of rational cubics. Let  
$$s: \mathbb P^1 \times \mathbb P^5 \to \mathbb P^8, $$
be the sum map, sending $(L,C) \in \mathbb P^1 \times \mathbb P^5$ to $L+C \in \vert 2H \vert = \mathbb P^8$. We claim that $s^* N$ is finite. Then its length is  the degree of $\upsilon^+$ and coincides
with the degree of the Segre embedding of $\mathbb P^1 \times \mathbb P^5$, which is six. To prove our claim, assume $s^*N$ is not finite. Then each element of
$\vert L \vert$ is a point of  the Enriques surface $S \subset \mathbb G$ and the union of these points is a conic. Since
$\ell$ is general in $S$ then $S$ is uniruled: a contradiction.
 \end{proof}
 Finally let $\sigma': \tilde W' \to \tilde W$ be the natural desingularization obtained, as in (\ref{MAINDIAGW}), by blowing up the set of ten nodes $\Sing \tilde W$. Then the diagram
\begin{equation}
\begin{CD}
S @<{\beta}<< {\mathbb I} @>{\upsilon^+}>> {\tilde W} @<{\sigma'}<<{\tilde W'} \\
\end{CD}
\end{equation}
defines a 'cylinder map'
\begin{equation}
\label{CYLINDER}
\mathsf c:  H_1 (S, \mathbb Z) \to H_3(\tilde W', \mathbb Z),
\end{equation}
where $\mathsf c := \sigma'^{*} \circ \upsilon^+_*  \circ \beta^* $. Now let us consider the non empty Zariski open set $\mathring S \subset S$ of points $\ell \in S$ satisfying the following conditions: 
\begin{itemize} \it
\item[(a)] $\upsilon^+: \mathbb I \to \tilde W$ is finite over $\upsilon^+(\mathbb I_{\ell})$ and generically unramified at $\mathbb I_{\ell}$, \par 
\item[(b)] $\upsilon^+(\mathbb I_{\ell} ) \cap \Sing \tilde W = \emptyset$, that is, $\rank Q_w \geq 3$,  $\forall (w, r) \in \upsilon^+(\mathbb I_{\ell})$, \par
\item [(c)] $\delta (\ell) \notin \Sing \delta(S)$, where the map  $\delta: S \to \mathbb G_W$ sends $\ell$ to $\mathbb I_{\ell}$.  \par
\end{itemize} 
Let $[\gamma] \in H_1(S, \mathbb Z) = \mathbb Z / 2 \mathbb Z$ be the generator of this group. By proposition \ref{FUNDAMENTAL}  we can choose $\gamma$ so that its image is in the Zariski open set 
$\mathring S$. Now,  in the euclidean topology, $\mathbb I$ is homeomorphic to $S \times \mathbb P^1$ and we have   \begin{equation} H_3(\mathbb I, \mathbb Z) \cong H_1(S, \mathbb Z) \otimes H_2(\mathbb P^1, \mathbb Z) \cong \mathbb Z / 2\mathbb Z.
 \end{equation}
Indeed, one has $H_1(\mathbb P^1, \mathbb Z) = H_3(\mathbb P^1, \mathbb Z) = 0$. Moreover, by Poincar\'e duality, one has $H_3(S, \mathbb Z) \cong H^1(S, \mathbb Z)$. Furthermore $H^1(V, \mathbb Z)$ has no torsion for any smooth, projective variety $V$: see \cite{V1} proof of lemma 2.9 or deduce this property from the exponential sequence of $V$ . Since  the first Betti number of $S$ is zero, it follows $H^1(S, \mathbb Z) = 0$. Then,  applying K\"unneth formulae to $\mathbb I$, the above isomorphism follows.  Now let us consider on $\mathbb I$ the $3$-cycle 
$$ \beta^* \gamma := \mathbb I_{\gamma}. $$
Clearly $\mathbb I_{\gamma}$ defines the $2$-torsion class generating  $H_3(\mathbb I, \mathbb Z)$ and the $3$-cycle $$ T_{\gamma} := \upsilon_*^+(\mathbb I_{\gamma}). $$ By (b) $T_{\gamma}$ is a $3$-cycle of $\tilde W - \Sing \tilde W$, moreover $\sigma'$ is biregular over the set $\tilde W - \Sing \tilde W$. Then consider the $3$-cycle 
$ T'_{\gamma} := {\sigma'}^{*}(T_{\gamma}) $ of $\tilde W'$ and its class
  \begin{equation}
 \tau' = \mathsf c([\gamma]) \in H_3(\tilde W', \mathbb Z).
 \end{equation}
 This is a $2$-torsion element, we can now conclude via the following result.
 \begin{theorem} $\tau'$ is a nonzero element. \end{theorem}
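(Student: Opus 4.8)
The plan is to prove that $\mathsf c$ is injective; since its source is $H_1(S,\mathbb Z)=\mathbb Z/2\mathbb Z$, this reduces to showing $\tau'=\mathsf c([\gamma])\neq 0$, and as $\tau'$ is already $2$-torsion it suffices to detect it by a single $\mathbb Z/2\mathbb Z$-valued invariant. The natural route is to play the geometric cylinder $\mathsf c$ against Beauville's homomorphism $h\colon H^3(\tilde W',\mathbb Z)\to H^3(S,\mathbb Z)$, which is known to restrict to an isomorphism $\text{Tors}\,H^3(\tilde W',\mathbb Z)\xrightarrow{\ \sim\ }H^3(S,\mathbb Z)\cong\mathbb Z/2\mathbb Z$. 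First I would make precise that, up to the Stein factorization of (\ref{RATVAR}), the morphism $\upsilon^+$ is the restriction of $\tilde\phi\colon\tilde{\mathbb G}\to\tilde W$ to the exceptional divisor $\mathbb I=\beta^{-1}(S)$. Under the blow-up decomposition $H^*(\tilde{\mathbb G},\mathbb Z)\cong H^*(\mathbb G,\mathbb Z)\oplus\sigma^*H^*(S,\mathbb Z)$ the class $\beta^*[\gamma]=\mathbb I_\gamma$ lies purely in the $\sigma^*H^*(S)$ summand, so that $\mathsf c$ transports that summand into $H_3(\tilde W',\mathbb Z)$ through $\tilde\phi$.

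The goal of the second step is the adjunction identity $h\circ\mathrm{PD}\circ\mathsf c=\pm\,\mathrm{id}$ on $H_1(S,\mathbb Z)=\mathbb Z/2\mathbb Z$, where $\mathrm{PD}$ denotes Poincar\'e duality on $\tilde W'$. Granting it, $h(\mathrm{PD}(\tau'))$ is a generator of $\text{Tors}\,H^3(S,\mathbb Z)$, hence nonzero, which forces $\tau'\neq0$ and proves the theorem. The identity is a projection-formula computation: conditions (a)--(b) guarantee that $\mathbb I_\gamma$ maps birationally onto $T_\gamma$ inside the smooth locus $\tilde W-\Sing\tilde W$, and $\sigma'$ is biregular there, so along this cycle the composite has local degree one. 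The genuine subtlety is that $\upsilon^+$ has global degree $6$, so one must check that the remaining five sheets of $\upsilon^+$ over $T_\gamma$ contribute trivially to the $2$-torsion of $H^3(S,\mathbb Z)$ rather than cancelling the class. I would attack this by tracking the five ``extra'' rays $\ell_2,\dots,\ell_6\in S$ lying in the same ruling as $\ell$, showing that the $1$-cycle $[\gamma]+\sum_{i}[\gamma_i]$ they produce reduces to $[\gamma]$ modulo $2$.

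The main obstacle is precisely this matching step: establishing rigorously that the hands-on cylinder $\mathsf c=\sigma'^*\circ\upsilon^+_*\circ\beta^*$ agrees, on $\text{Tors}\,H^3$, with the adjoint of Beauville's abstractly defined $h$, in spite of the $6{:}1$ nature of $\upsilon^+$ (note that a naive transfer would yield multiplication by $6\equiv0\bmod 2$ and kill the class). A clean alternative that I would keep in reserve, bypassing Beauville, is to detect $\tau'$ directly through the nondegenerate linking form on $\text{Tors}\,H_3(\tilde W',\mathbb Z)\times\text{Tors}\,H_2(\tilde W',\mathbb Z)$: one builds a torsion $2$-cycle supported on the exceptional quadrics over the ten nodes $\Sing\tilde W$ and computes its linking number with $T'_\gamma$ to equal $\tfrac12\in\mathbb Q/\mathbb Z$. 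This transfers the difficulty to a local analysis at the nodes, where the conic-bundle discriminant $N_o$ and the Wirtinger gluing of Remark~\ref{WIRTINGER} encode the required nontriviality.
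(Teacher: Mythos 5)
Your overall detection principle is the paper's own: the paper also pulls the class back to the exceptional locus of the blow-up and reads it off there, and your observation that $\upsilon^+=\tilde\phi\vert\mathbb I$ is correct. But the step that carries all the content --- showing that the residual five sheets of $\upsilon^{+}$ over $T_\gamma$ contribute nothing --- is exactly where your plan breaks down. The residual components of $(\upsilon^{+})^{-1}(T_\gamma)$ are \emph{not} of the form $\beta^{-1}(\gamma_i)$ for loops $\gamma_i\subset S$: for a single fixed $\ell\in\Gamma$, the five extra rays of $S$ lying in the ruling of $\ell$ vary with the quadric $w$ moving in the pencil $W_\ell$, so already over the one point $\ell$ they sweep out a complex curve in $S$; moreover monodromy (over $\Gamma$ and over the pencils) permutes the sheets, so there are no globally defined $1$-cycles $\gamma_1,\dots,\gamma_5$ at all. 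Consequently the congruence ``$[\gamma]+\sum_i[\gamma_i]\equiv[\gamma]\bmod 2$'' that your argument hinges on is not a statement one can formulate, let alone prove, and nothing in your sketch replaces it. A smaller but real defect: since the blow-up decomposition puts $H^{k-2}(S,\mathbb Z)$ inside $H^{k}(\tilde{\mathbb G},\mathbb Z)$, the $S$-summand of $H^3(\tilde{\mathbb G},\mathbb Z)$ is $H^1(S,\mathbb Z)=0$, so ``pull back in degree $3$ and project'' is the zero map; your adjunction identity is not even well posed until $h$ is re-defined through the exceptional divisor $\mathbb I$ (which is precisely what the paper does, via $H_3(\mathbb I,\mathbb Z)\cong H_1(S,\mathbb Z)\otimes H_2(\mathbb P^1,\mathbb Z)\cong\mathbb Z/2\mathbb Z$).

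The paper closes the five-sheet gap by a mechanism you never isolate, and it is the actual crux of the proof. Writing $\upsilon^{+*}T'_\gamma=m\,\mathbb I_\gamma+Z$ as a preimage cycle, condition (a) (finiteness and generic unramifiedness along $\mathbb I_\ell$) gives $m=1$; note that because one takes preimages of cycles rather than a transfer, no factor $6$ ever appears, so the worry that motivated your parity argument dissolves. Then condition (c) guarantees that $t\mapsto W_t$ is injective near $\Gamma$, and since distinct pencils are distinct lines of $W$ they meet in at most one quadric; hence $\upsilon^+(\mathbb I_t)\not\subset\upsilon^+(\mathbb I_\gamma)$ for $t\in S-\Gamma$, so $Z$ contains no fibre $\mathbb I_t$ with $t\notin\Gamma$, its K\"unneth component in $H_1(S,\mathbb Z)\otimes H_2(\mathbb P^1,\mathbb Z)$ vanishes, and $\upsilon^{+*}\sigma'_*\tau'=[\mathbb I_\gamma]\neq 0$. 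Two further remarks: quoting Beauville's theorem that $h$ is an isomorphism on torsion presupposes $\mathrm{Tors}\,H^3(\tilde W',\mathbb Z)\cong\mathbb Z/2\mathbb Z$, i.e.\ the Artin--Mumford nonvanishing that this construction is meant to reprove geometrically, whereas the paper's detection in $H_3(\mathbb I,\mathbb Z)$ is self-contained; and your fallback via the linking form at the ten nodes is a genuinely different (Artin--Mumford-flavoured) route, but it is only a gesture --- no torsion $2$-cycle is produced and no linking number is computed.
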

\begin{proof} It suffices to show that the image of $\tau'$ by $(\upsilon^{+*} \circ \sigma'_*)$ is nonzero in $H_3(\mathbb I, \mathbb Z)$. This is equivalent to show that
 $$
 (\upsilon^{+*} \circ \sigma'_* \circ { \sigma'}^{*}) (\tau') = \upsilon^{+*}(\tau') = [\mathbb I_{\gamma}].
 $$
 Since $\sigma'$ is biregular over $\tilde W - \Sing \tilde W$ the first equality is immediate. Let us prove the second one. For the
 class $[\gamma]$ of $H_1(\mathring S, \mathbb Z)$ we can even assume that the map $\gamma: (0,1) \to \mathring S$ is a real analytic embedding. We have 
 $$\upsilon^{+*} T'_{\gamma} = m\mathbb I_{\gamma} + Z, $$ 
 where $Z$ is the image by $i_*$ of a cycle of $\mathbb I - \mathbb I_{\gamma}$ and $i: (\mathbb I - \mathbb I_{\gamma}) \to \mathbb I$ is the inclusion. Let $\Gamma := \gamma([0,1])$ and $\ell \in \Gamma$ then, by assumption (a) on $\gamma$, the morphism $\upsilon^+$ is finite over $\upsilon^+(\mathbb I_{\ell})$ and generically unramified along $\mathbb I_{\ell}$. This implies $m = 1$ and that $Z$ is a $3$-cycle. Now let $t \in S - \Gamma$, we claim that $\upsilon^+( \mathbb I_t)$ is not in  $\upsilon^+(\mathbb I_{\gamma})$. Hence no summand of  $Z$ is a pull-back of a cycle by $\beta: \mathbb I \to S$. Let $H_3(\mathbb I, \mathbb Z) \cong \bigoplus_{a+b = 3}H_a(S, \mathbb Z) \otimes H_b(\mathbb P^1, \mathbb Z)$ be the K\"unneth isomorphism, then no summand of $Z$ defines a class in $H_1(S, \mathbb Z) \otimes H_2(\mathbb P^1, \mathbb Z)$. This implies that the class of $Z$ is zero and hence the theorem follows.   To complete the proof we show the above claim. Let $t \in S$, in what follows $W_t \subset W$ is the pencil of quadrics defined by $t$, that is, the image of $\upsilon^+(\mathbb I_t)$ by $f: \tilde W \to W$. Now let $t \in S - \Gamma$, then, by our assumption (c), $\Gamma$ is in $\delta(S) - \Sing \delta(S)$, hence $W_t$ is not in the family $\lbrace  W_{\ell}, \  \ell \in \Gamma \rbrace$. Since $W_t$, $W_{\ell}$ are lines they intersect in at most one point and  the same is true for $\upsilon^+(\mathbb I_t)$, $\upsilon^+(\mathbb I_{\ell})$. Let $W_{\gamma} = \bigcup_{\ell \in \Gamma} W_{\ell}$, this implies by (a) that the intersection of $\mathbb I_t$ and ${\upsilon^{+}}^{-1}(W_{\gamma})$ has real dimension $\leq 1$. Hence $\upsilon^+( \mathbb I_t)$ is not in  $\upsilon^+(\mathbb I_{\gamma})$.
 \end{proof}
Since $H^3(\tilde W', \mathbb Z)$  admits nonzero torsion and this is a birational invariant then $\tilde W$ is not rational. Since it is unirational, it is a counterexample to L\"uroth problem, see \cite{AM}.
\par So far, we have performed our goal of explicitly reconstructing the $2$-torsion cohomology of an Artin-Mumford double solid $\tilde W$ from that of its Enriques surface $S$. The $2$-torsion group
$H_1(S, \mathbb Z)$ embeds in $H^3(\tilde W', \mathbb Z)$ via the map $\mathsf c: H_1(S, \mathbb Z) \to \mathbb H_3(\tilde W', \mathbb Z)$ and Poincar\'e duality. \par  This offers a partially
new proof of Artin-Mumford counterexample to L\"uroth problem, based on some geometry of Enriques surfaces, more precisely of Reye congruences of lines.
    
 \ \par 
 {\tiny }
\end{document}